\documentclass[10pt]{article}

\usepackage{psfrag}
\usepackage{graphicx}
\usepackage{amsmath,amsfonts,amssymb,amsxtra,amsthm}
\usepackage{mathrsfs}
\usepackage{stmaryrd}
\usepackage{pinlabel}
\usepackage{xypic}
\usepackage{color}

\usepackage{hyperref}

\input xy
\xyoption{all}

\def\immerses{\looparrowright}
 \def\into{\hookrightarrow}

\def\RR{\mathbb{R}}
\def\BB{\mathcal{B}}
\def\AA{\mathcal{A}}

\def\zee{\mathbb{Z}}

\def\ess{\ensuremath{\mathbb{S}}}

\newcommand{\mnote}[1]{}


\newtheorem{theorem}{Theorem}

\newtheorem{lemma}[theorem]{Lemma}

\newtheorem{corollary}[theorem]{Corollary}

\newtheorem{conjecture}[theorem]{Conjecture}

\theoremstyle{definition}
\newtheorem{definition}[theorem]{Definition}

\theoremstyle{remark}
\newtheorem{remark}[theorem]{Remark}

\author{Larsen Louder and Henry Wilton} 

\title{Stackings and the $W$-cycles conjecture}

\begin{document}
\maketitle

\begin{abstract}
  We prove Wise's $W$-cycles conjecture.  Consider a compact graph
  $\Gamma'$ immering into another graph $\Gamma$.  For any immersed
  cycle $\Lambda:S^1\to \Gamma$, we consider the map $\Lambda'$ from
  the circular components $\ess$ of the pullback to $\Gamma'$. Unless
  $\Lambda'$ is reducible, the degree of the covering map $\ess\to
  S^1$ is bounded above by minus the Euler characteristic of
  $\Gamma'$.  As a consequence, we obtain a homological version of
  coherence for one-relator groups.
\end{abstract}

\section{Introduction}

As part of his work on the coherence of one-relator groups, Wise made
a conjecture about the number of lifts of a cycle in a free group
along an immersion, which we will call the \emph{$W$-cycles
  conjecture}.  If $f_1:\Gamma_1\immerses\Gamma$ and
$f_2:\Gamma_2\immerses\Gamma$ are immersions of graphs, then the fibre
product
\[
\Gamma_1\times_\Gamma\Gamma_2=\{(x,y)\in \Gamma_1\times \Gamma_2\mid f_1(x)=f_2(y)\}
\]
immerses into $\Gamma_1$ and $\Gamma_2$, and is the pullback of $f_1$
and $f_2$.  An immersed loop $\Lambda:S^1\immerses\Gamma$ is
\emph{primitive} if it does not factor properly through any other
immersion $S^1\immerses\Gamma$.

With this definition, the $W$-cycles conjecture can be stated as follows.

\begin{conjecture}[Wise \cite{wise}]\label{conjecture}
  Let $\rho:\Gamma'\to\Gamma$ be an immersion of finite connected core
  graphs and let $\Lambda:S^1\to \Gamma$ be a primitive immersed loop.
  Let $\ess$ be the union of the circular components of
  $\Gamma'\times_\Gamma S^1$.  Then the number of components of $\ess$
  is at most the rank of $\Gamma'$.
\end{conjecture}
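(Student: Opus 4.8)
The plan is to replace the bare count of components of $\ess$ by the geometry of a \emph{stacking}: a way of laying the circular components out in parallel sheets over $\Gamma'$ so that they become disjoint curves on a surface, after which the bound should fall out of an Euler characteristic computation. First I would record the combinatorial meaning of the components. Fixing a basepoint $*$ of $\Gamma$ on $\Lambda$, the components of $\Gamma'\times_\Gamma S^1$ are the orbits of the partial return map $x\mapsto w\cdot x$ on the finite fibre $\rho^{-1}(*)$, where $w$ is the cyclically reduced word read by $\Lambda$; the circular components of $\ess$ are exactly the periodic orbits. The distinctness of the points in a periodic orbit should show that distinct components are genuinely different immersed loops in $\Gamma'$ — they read distinct cyclic words, and since $\Lambda$ is primitive none of them is a proper power — which is what will later let me treat them as pairwise non-isotopic curves. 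I would also reduce to the case that every edge of $\Gamma'$ is crossed by some component: deleting an uncrossed edge changes neither $\ess$ nor the return map, and never raises the first Betti number, so it suffices to bound the number of components by the first Betti number of the crossed subgraph, which is at most $\rk(\Gamma')$.

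The heart of the argument is the construction of a stacking. Over each edge $e$ of $\Gamma'$ the preimage in $\ess$ is a finite set of \emph{strips}, one for each time a component crosses $e$; a stacking is a linear order on the strips over each edge that is compatible at the vertices, meaning that the local orders on the edges incident to a vertex assemble into one consistent order on the sheets passing through. Given such orders I can thicken: replace each edge by a band $e\times[0,1]$ containing its strips at the prescribed heights and glue the bands at the vertices according to the compatible orders. The result is a ribbon surface $\Sigma$ that deformation retracts to $\Gamma'$ — so that $\chi(\Sigma)=\chi(\Gamma')$ — and on which the components of $\ess$ sit as \emph{disjoint} embedded loops, since over each edge they occupy distinct heights.

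I expect the existence of a stacking to be the main obstacle. The only thing that can obstruct a consistent order at a vertex is a pair of strips forced to cross, and ruling this out is exactly where primitivity of $\Lambda$ must enter: two sheets that are compelled to cross correspond to a coincidence pattern in $w$ that a proper power would allow but a primitive word should forbid. I would isolate this as a local lemma at each vertex and then patch the local orders into a global one, most likely by pulling back a generic linear order along an auxiliary map and checking that the primitivity constraint makes the patching unobstructed.

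With a stacking in hand the count becomes an Euler characteristic argument. The word $w$ orients every strip coherently, so $\Sigma$ is an oriented surface carrying the loops as an oriented multicurve flowing in the $w$-direction, and the stacking supplies a height function whose topmost and bottommost sheets contribute free boundary to $\Sigma$. Tracking these contributions, I would show that each component forces a definite quantum of negative Euler characteristic: in the irreducible case enough to yield the sharp statement that the total degree of $\ess\to S^1$ is at most $-\chi(\Gamma')$, and in all cases that the number of components is at most $1-\chi(\Gamma')=\rk(\Gamma')$. The remaining difficulty is the reducible case, where the oriented multicurve degenerates and distinct sheets may be identified; there I would split $\Lambda'$ along the reduction and induct, absorbing the single unit of slack that separates the degree bound $-\chi(\Gamma')$ from the component bound $1-\chi(\Gamma')$.
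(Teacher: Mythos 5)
You have essentially rediscovered the paper's central device: your ``compatible linear orders on strips'' is exactly what the paper calls a \emph{stacking} (an embedding of the circles into graph${}\times\RR$ lifting the immersion), and your ``topmost/bottommost sheets contribute free boundary'' is the paper's count of $-\chi$ by the arcs visible from above or below. The gap is in the step you yourself flag as the main obstacle: the \emph{existence} of the stacking, which is the entire content of the theorem, and the mechanism you propose for it --- a local lemma at each vertex followed by generic patching --- cannot work, because the obstruction to stacking is not local. Any immersion admits consistent orders over each edge and near each vertex separately; what can fail is the monodromy of the non-crossing constraints around the circles. Concretely, take $w=abab$ in the rose on $a,b$ and parametrize the circle so the vertex preimages are $0,1,2,3$: non-crossing of the two strips over $a$ forces $h(0)>h(2)\iff h(1)>h(3)$, and non-crossing over $b$ forces $h(1)>h(3)\iff h(2)>h(0)$, a contradiction --- yet no single vertex or edge exhibits any local obstruction. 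So primitivity cannot enter through a vertex-local lemma; it must control a global monodromy, and no ``generic'' choice of heights helps, since the constraint system is simply unsatisfiable for proper powers and its satisfiability for primitive words is precisely the theorem. The paper handles this with a cyclic tower argument in the style of Brodski{\u\i}--Howie: lift $\Lambda$ through a maximal tower of subgraph inclusions and infinite cyclic covers (primitivity guarantees the top of the tower is a homeomorphism onto a circle), then push the stacking back down the tower using an embedding $\tilde\Gamma\times\RR\into\Gamma\times\RR$ for each cyclic cover. Equivalently, existence of the stacking amounts to a right-invariant preorder on a one-relator group; some such genuinely global input is unavoidable, and your sketch contains no substitute for it.

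There is a second structural problem: you stack the pullback circles directly over $\Gamma'$, whereas the paper stacks the single primitive circle over $\Gamma$ and \emph{pulls the stacking back} along $\rho$. The pullback route is what makes the stacking \emph{good}: every component of $\ess$ covers $S^1$, hence passes over the points that are topmost and bottommost downstairs, so every component contributes arcs to both the ``seen from above'' and ``seen from below'' sets. A stacking built directly upstairs need not be good --- a component can lie strictly between two others over every edge it traverses, contributing no free boundary, so your ``definite quantum of negative Euler characteristic per component'' evaporates. Relatedly, your fallback count (disjoint, pairwise non-isotopic, essential curves on the ribbon surface $\Sigma$) is numerically insufficient: writing $\chi(\Sigma)=2-2g-b=1-\rk(\Gamma')$, such a multicurve can have up to $3g-3+2b$ components, which exceeds $\rk(\Gamma')=2g+b-1$ as soon as $g+b>2$. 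The count that actually closes the argument is the visibility count together with a pigeonhole step (two preimages of a top-visible edge in one visible arc force an edge traversed once, i.e.\ reducibility), and that count needs goodness --- which is exactly what pulling back from downstairs provides for free.
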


The purpose of this note is to prove Wise's conjecture; indeed, we
prove a stronger statement.  As usual, if $\pi$ is a covering map then
$\deg\pi$ denotes its degree, the number of preimages of a point.  An
immersion of a union of circles $\Lambda:\ess\to\Gamma$ is called
\emph{reducible} if there is an edge of $\Gamma$ which is traversed at
most once by $\Lambda$.

\begin{theorem}
  \label{maintheorem}
  Let $\rho:\Gamma'\immerses\Gamma$ be an immersion of finite
  connected core graphs and let $\Lambda:S^1\to \Gamma$ be a primitive
  immersed loop.  Suppose that $\ess$, the union of the circular
  components of $\Gamma'\times_\Gamma S^1$, is non-empty, so there is
  a natural covering map $\sigma:\ess\immerses S^1$.  Then either
  \[
  \deg\sigma \leq -\chi(\Gamma')
  \]  
  or the pullback immersion $\Lambda':\ess\to\Gamma'$ is reducible.
\end{theorem}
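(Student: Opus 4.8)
The plan is to reduce the statement to a combinatorial inequality about the pullback and then to prove that inequality by means of a \emph{stacking}, that is, a coherent way of ordering the strands of $\ess$ lying over each edge of $\Gamma'$. Write $n$ for the combinatorial length of $\Lambda$ and set $d=\deg\sigma$. Since $\sigma:\ess\immerses S^1$ is a degree-$d$ covering of a circle subdivided into $n$ edges, $\ess$ is a disjoint union of circles carrying exactly $dn$ edges, and the pullback immersion $\Lambda':\ess\to\Gamma'$ is surjective onto $\Gamma'$ except in the reducible case. For each edge $e$ of $\Gamma'$ let $n_e$ denote the number of edges of $\ess$ mapped onto $e$; then $\sum_{e}n_e=dn$, and $\Lambda'$ is reducible precisely when some $n_e\leq 1$. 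Because $-\chi(\Gamma')=E(\Gamma')-V(\Gamma')$, the theorem reduces to showing that, whenever $n_e\geq 2$ for every edge $e$, one has $d\leq E(\Gamma')-V(\Gamma')$.

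The central device is the stacking. Over each edge $e$ the $n_e$ strands should be linearly ordered by "height", and over each vertex $v$ the strand-ends arriving there should be linearly ordered compatibly, so that the turns $\ess$ makes at $v$ form a non-crossing matching of those ends; equivalently, a stacking is an embedding $\ess\into\Gamma'\times[0,1]$ projecting to $\Lambda'$, with the height coordinate recording the order. I would first prove that such a stacking exists, and this is where I expect the genuine difficulty to lie. Unlike a generic lift into a thickening of $\Gamma'$, a stacking must order \emph{all} strands passing through a vertex along a single interval with no crossings, which is impossible for an arbitrary immersed family of cycles. The plan is to build the stacking inductively along a Magnus-type hierarchy for the primitive loop $\Lambda$, peeling off one generator at a time and extending the order across the resulting splitting; primitivity of $\Lambda$, through the Freiheitssatz (the ``Satz 6'' of the title reference), is exactly the hypothesis that keeps the induction alive and rules out the interleaving that would obstruct a consistent order. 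Establishing the existence of a stacking is the main obstacle.

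Granting a stacking, the remaining count should be comparatively routine. From the vertex orders I would assemble the ribbon (fatgraph) surface $S$ obtained by thickening $\Gamma'$, so that $S$ deformation retracts to $\Gamma'$, giving $\chi(S)=\chi(\Gamma')$, and $\ess$ sits in $S$ as an embedded family of curves, each essential since $\Lambda'$ is an immersion and so $\Lambda'|_C$ is homotopically nontrivial on every component $C$. The degree $d$ is recorded by how $\ess$ meets a suitable transversal dual to $\Lambda$, while the non-crossing matchings at the vertices, combined with the hypothesis $n_e\geq 2$, feed an Euler-characteristic bookkeeping on $S$ — equivalently, a count of the innermost and outermost turns, of which each vertex of $\Gamma'$ contributes only boundedly many — that yields $d\leq -\chi(\Gamma')$. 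The slack in this count is concentrated on edges carrying few strands, so the only way the bound can fail is for some edge to be traversed at most once, which is exactly the reducible alternative. Putting the three steps together delivers the dichotomy in the statement.
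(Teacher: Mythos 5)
Your overall skeleton---reduce to ``if every edge of $\Gamma'$ is traversed at least twice then $\deg\sigma\leq-\chi(\Gamma')$'', introduce stackings, prove existence using primitivity, then count---is the same as the paper's, but neither of the two pillars is actually delivered, and the second one is built on a false equivalence. The first gap is the one you flag yourself: the existence of a stacking is only a plan (``Magnus-type hierarchy \dots\ through the Freiheitssatz \dots\ the main obstacle''), not an argument. The paper proves it (Lemma \ref{lem:lifttoembedding}) by a cyclic tower argument in the style of Brodski{\u\i}--Howie: lift $\Lambda$ maximally along a tower of subgraph inclusions and infinite cyclic covers; primitivity forces the top lift to be a homeomorphism onto a circle, hence trivially stackable; then descend the tower inductively, the key step being an embedding $\tilde\Gamma\times\RR\into\Gamma\times\RR$ over any infinite cyclic cover, obtained from the diagonal $\pi_1\Gamma$-action on $\tilde\Gamma\times\RR$ (Lemma \ref{lem: Cyclic embedding}). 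No Freiheitssatz is invoked; the argument is elementary and self-contained. Note also that the paper stacks the primitive loop $\Lambda$ \emph{downstairs}, over $\Gamma$, and pulls the stacking back to $\ess$; it never stacks $\Lambda'$ directly over $\Gamma'$, and this distinction matters for what follows.

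The second, more serious gap is the counting step, which you call ``comparatively routine.'' Your ``equivalently'' identifying a stacking with a system of non-crossing matchings at vertices (hence with a ribbon embedding) is false: an embedding into $\Gamma'\times\RR$ only requires the strand orders over the edges at a vertex to be restrictions of a single linear order of heights, with no planarity condition at all. Concretely, the paper's own example $aabbb$ (Figure \ref{fig:example}) is stackable, yet $a^2b^3$ is not represented by an embedded curve in either thickening of the wedge of two circles (once-holed torus or thrice-holed sphere), so the ribbon surface $S$ containing $\ess$ that your third paragraph starts from need not exist. Even when it does, $\chi(S)$ bounds neither the number of disjoint essential curves (they can be pairwise parallel) nor $\deg\sigma$, and your sentence that ``the only way the bound can fail is for some edge to be traversed at most once'' is exactly the assertion to be proved. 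What actually closes this gap in the paper is the interaction between the stacking and the covering $\sigma$ (Lemmas \ref{lem::computerank} and \ref{lem:reducedrankofsubgroup}): the set $\AA_{\hat\Lambda'}$ of points of $\ess$ lying strictly above every other point with the same image consists of exactly $-\chi(\Gamma')$ open arcs; the $\deg\sigma$ preimages of a fixed edge $e$ of $S^1$ visible from above all lie in $\AA_{\hat\Lambda'}$ \emph{because} $\hat\Lambda'$ is pulled back from $\hat\Lambda$; so if $\deg\sigma>-\chi(\Gamma')$, two of them lie in a common arc $A$, whence $\sigma(A)$ wraps all the way around $S^1$ and $A$ contains a preimage $f'$ of an edge $f$ visible from below; then $f'\in\AA_{\hat\Lambda'}\cap\BB_{\hat\Lambda'}$, which means some edge of $\Gamma'$ is traversed exactly once, i.e.\ $\Lambda'$ is reducible. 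This pigeonhole/wrap-around step is the punch line of the whole proof, and nothing in your outline plays its role; indeed a stacking of $\Lambda'$ constructed upstairs in isolation could not support it, since the preimages of a single downstairs edge would then have no reason to be visible from above.
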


The statement of the conjecture is a corollary of this
theorem. Indeed, the inequality in the theorem is strictly stronger
than the inequality in the conjecture; alternatively, in the reducible
case, we may remove an edge and proceed by induction.

The connection with Baumslag's question is provided by Wise's notion
of nonpositive immersions. As in the case of graphs, an immersion of
cell complexes is a locally injective cellular map.

\begin{definition}[Wise]
  A cell complex $X$ has \emph{nonpositive immersions} if, for every
  immersion of compact, connected complexes $Y\immerses X$, either
  $\chi(Y)\leq 0$ or $Y$ has trivial fundamental group.
\end{definition}

Our main theorem implies that the presentation complexes associated to
one-relator groups have non-positive immersions.

\begin{corollary}
  Let $X$ be compact 2-complex with one 2-cell $e^2$ and suppose that
  the attaching map of $e^2$ is an immersion.  Then $X$ has
  non-positive immersions.
\end{corollary}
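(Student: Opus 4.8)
The plan is to translate the corollary into the language of the main theorem and then run an induction on the number of cells of $Y$. Write $\Gamma=X^{(1)}$ and let $\Lambda\colon S^1\to\Gamma$ be the attaching map of $e^2$, which is an immersion by hypothesis; replacing $\Lambda$ by the primitive loop it covers (this only improves the estimates below) we may assume $\Lambda$ is primitive. Given an immersion $Y\immerses X$ with $Y$ compact and connected, set $\Gamma'=Y^{(1)}$; restricting to $1$-skeleta gives an immersion $\rho\colon\Gamma'\immerses\Gamma$, and each of the $k$ two-cells of $Y$ maps homeomorphically onto $e^2$, so its boundary is a degree-one lift of $\Lambda$, that is, a circular component of $\Gamma'\times_\Gamma S^1$. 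Distinct two-cells give distinct components, whence $k\le\deg\sigma$, and $\chi(Y)=\chi(\Gamma')+k$.

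I would prove, by induction on the number of cells of $Y$, the slightly stronger statement $(\star)$: for every immersion of a compact connected complex $Y\immerses X$ one has $\chi(Y)\le 1$, and moreover $\chi(Y)\le 0$ whenever $\pi_1(Y)\neq 1$. The strengthening $\chi(Y)\le 1$ is exactly what makes the induction close, since deleting an edge of $Y$ raises the Euler characteristic by one and so I need a bound on the surviving pieces. The base case is $k=0$, where $Y=\Gamma'$ is a connected graph: then $\chi(Y)\le 1$ with equality precisely for trees, and a non-tree has $\chi\le 0$ and nontrivial $\pi_1$, so $(\star)$ holds.

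For the inductive step I first collapse every hair of $\Gamma'$ (a valence-one vertex and its edge, which no immersed boundary can traverse); this is a collapse onto a subcomplex, so the restricted map is still an immersion and both $\chi$ and $\pi_1$ are unchanged while the cell count drops. We may thus assume $\Gamma'$ is a core graph and apply the main theorem to $\rho$ and $\Lambda$. If $\Lambda'$ is not reducible, then $k\le\deg\sigma\le-\chi(\Gamma')$, so $\chi(Y)=\chi(\Gamma')+k\le 0$ and $(\star)$ holds. If $\Lambda'$ is reducible, pick an edge $e$ of $\Gamma'$ traversed at most once by $\ess$. If some two-cell traverses $e$ once, then $e$ is a free face and I collapse that two-cell across it: this passes to a subcomplex, preserves $\chi$ and $\pi_1$, lowers the cell count, and $(\star)$ follows by induction. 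Otherwise $e$ lies in no two-cell, and I delete its interior; the result $Y\setminus\mathring e$ is a subcomplex immersing in $X$ with $\chi(Y\setminus\mathring e)=\chi(Y)+1$ and fewer cells. Applying the inductive form of $(\star)$ to its one or two components and reassembling via additivity of $\chi$ and van Kampen's theorem gives $(\star)$ for $Y$: in the non-separating case $\pi_1(Y)$ surjects onto $\zee$ and $\chi(Y)=\chi(Y\setminus\mathring e)-1\le 0$, and in the separating case $\chi(Y)\le 1$ follows from the bound $\chi\le 1$ on each piece, while $\pi_1(Y)\neq 1$ forces one piece to have nontrivial fundamental group and hence nonpositive $\chi$.

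The main obstacle is precisely the reducible subcase in which the distinguished edge meets no two-cell: deleting it \emph{increases} $\chi$, so the bare corollary is not visibly preserved by the reduction. Recognising that one must instead propagate the strengthened bound $\chi(Y)\le 1$ through the induction is the crux; once that is in place, the remaining work is the routine verification that each move (hair collapse, free-face collapse, edge deletion) produces a subcomplex to which the immersion restricts, together with the bookkeeping of $\chi$ and $\pi_1$ under separating versus non-separating deletions.
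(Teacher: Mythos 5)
Your proposal is correct, and its skeleton is the paper's: restrict the immersion to $1$-skeleta, note that the $k$ two-cells of $Y$ contribute $k\le\deg\sigma$ distinct circular components of the pullback, apply Theorem~\ref{maintheorem} in the irreducible case to get $\chi(Y)=\chi(\Gamma')+k\le 0$, and collapse a free face and induct when some edge is traversed exactly once. Where you genuinely diverge is the induction scheme, and the divergence is substantive. The paper inducts only on the number of $2$-cells and passes from ``some $1$-cell is traversed exactly once'' directly to ``every $1$-cell is traversed at least twice,'' silently skipping $1$-cells traversed \emph{zero} times by the $2$-cells; for such an edge $\Lambda'$ may still be reducible, the free-face collapse is unavailable, and deleting the edge raises $\chi$, so the bare statement of the corollary is not visibly preserved. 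Your strengthened statement $(\star)$ (that $\chi(Y)\le 1$ unconditionally, and $\chi(Y)\le 0$ when $\pi_1(Y)\neq 1$), run as an induction on the total number of cells with the separating/non-separating analysis of edge deletion, is exactly what closes this case; likewise your hair collapses and the passage to the primitive root of $\Lambda$ discharge the core-graph and primitivity hypotheses of Theorem~\ref{maintheorem}, which the paper's short proof invokes without comment. In short, your argument is the paper's argument made watertight: what the paper buys with brevity, you buy back with a correct inductive bookkeeping that actually covers all cases.
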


\begin{proof}
  Suppose that $Y$ is a compact, connected 2-complex and
  $\rho:Y\immerses X$ is an immersion.  If $Y$ has no 2-cells then the
  result is immediate.  If some 1-cell of $Y$ is traversed exactly
  once by the 2-cells of $Y$ then the result follows by induction on
  the number of 2-cells of $Y$.  Therefore, we may assume that each
  1-cell of $Y$ is traversed at least twice by the 2-cells of $Y$.
  Set $\Gamma=X^{(1)}$ and $\Gamma'=Y^{(1)}$.  Let $\Lambda:S^1\to
  \Gamma$ be the attaching map of $e^2$ and let
  $\Lambda':\ess\to\Gamma'$ be the pullback map.  Since each 1-cell of
  $\Gamma'$ is traversed at least twice by $\Gamma'$, it follows from
  Theorem \ref{maintheorem} that the number of 2-cells of $Y$ is
  bounded above by $-\chi(\Gamma')$, and hence $\chi(Y)\leq 0$ as
  required.
\end{proof}

Wise has conjectured that, if a 2-complex $X$ has nonpositive
immersions, then its fundamental group is coherent.  Although
Baumslag's conjecture remains open, we do obtain a weaker statement:
every finitely generated subgroup of a one-relator group has finitely
generated second homology.

\begin{corollary}
  Let $G$ be a torsion-free
one-relator group. If $H<G$ is finitely generated
  then \[b_2(H)\leq b_1(H)-1\]
\end{corollary}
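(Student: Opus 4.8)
The plan is to deduce the homological bound from Theorem~\ref{maintheorem} (and the reducibility induction of the preceding Corollary), applied to an exhaustion of a covering space, using Hopf's formula to pass between the topology of that cover and the homology of $H$.

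First I would fix a presentation $G=\langle a_1,\dots,a_n\mid r\rangle$ with $r$ cyclically reduced. Since $G$ is torsion-free, a classical theorem of Karrass--Magnus--Solitar guarantees that $r$ is not a proper power, so the attaching map $\Lambda:S^1\to\Gamma$ of the single $2$-cell of the presentation complex $X$ (with $\Gamma=X^{(1)}$ a wedge of circles) is a \emph{primitive} immersed loop. Thus $X$ satisfies the hypotheses of the Corollary and, more usefully, the quantitative conclusion of Theorem~\ref{maintheorem} is available for the relevant $2$-cell counts. I may assume $H\neq 1$, since otherwise the asserted inequality is vacuous; being a nontrivial torsion-free group, $H$ then has $b_1(H)\geq 1$.

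Next I would pass to the covering space $X_H\to X$ corresponding to $H$, so that $\pi_1(X_H)\cong H$ and $X_H$ is a (possibly infinite) $2$-complex immersing into $X$. Two standard reductions apply. On one hand $b_1(X_H)=b_1(H)$, and Hopf's surjection $H_2(X_H;\mathbb{Q})\onto H_2(H;\mathbb{Q})$ gives $b_2(H)\leq b_2(X_H)$; it therefore suffices to prove $b_2(X_H)\leq b_1(X_H)-1$. On the other hand, because $X_H$ is $2$-dimensional, $H_2(X_H;\mathbb{Q})$ is just the space of rational $2$-cycles, so the inclusion of any finite subcomplex induces an \emph{injection} on $H_2$; hence $b_2(X_H)=\sup_Y b_2(Y)$, the supremum taken over finite connected subcomplexes $Y\subseteq X_H$, each of which I arrange to contain a fixed loop representing a nontrivial element of $H$ (so that $\pi_1(Y)\neq 1$). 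Each such $Y$ immerses into $X$, and its $2$-cells are precisely the circular components of the pullback of $\Lambda$ to the graph $Y^{(1)}$; thus Theorem~\ref{maintheorem}, together with the reducibility induction, yields the key per-stage bound
\[
b_2(Y)\ \leq\ \#\{\text{$2$-cells of }Y\}\ \leq\ -\chi(Y^{(1)})\ =\ b_1(Y^{(1)})-1 .
\]

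The main obstacle is the passage to the limit. The bound above controls $b_2(Y)$ by $b_1(Y^{(1)})-1$ rather than by $b_1(H)-1$, and these differ: a finite subcomplex of the infinite complex $X_H$ typically carries \emph{excess} first homology, namely frontier $1$-cycles that become null-homologous only after $Y$ is enlarged, so that $b_1(Y^{(1)})>b_1(H)$ and $\chi(Y)\to-\infty$ along a naive exhaustion. To close the gap I would run an $H_1$-efficient exhaustion: having chosen $Y$ so that $H_1(Y)\to H_1(H)$ is surjective, I enlarge $Y$ by adjoining, for each class in the finite-dimensional kernel $\ker\!\big(H_1(Y^{(1)})\to H_1(H)\big)$, a bounding $2$-chain in $X_H$ (which exists by definition of the kernel). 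The point I must verify is that this filling can be arranged so that the number of $2$-cells adjoined is matched against the excess $1$-cycles killed, so that the quantity $\#\{\text{$2$-cells}\}-b_1(\text{$1$-skeleton})$ is non-increasing along the enlargement and regularizes, in the limit, to $b_2(X_H)-b_1(X_H)\leq -1$. Establishing this matching---essentially an efficiency statement asserting that the sharpness of Theorem~\ref{maintheorem} survives the filling of frontier cycles---is where the real work lies; once it is in place, combining with $b_2(H)\leq b_2(X_H)$ completes the proof.
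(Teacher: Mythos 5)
Your overall strategy---pass to the cover $X_H$, bound finite pieces via Theorem~\ref{maintheorem}, and take a limit---is the same as the paper's, and several of your preliminary reductions are fine; indeed, your use of Karrass--Magnus--Solitar to extract primitivity of the relator from torsion-freeness, and of Hopf's formula in place of Lyndon's asphericity theorem, are legitimate ways of filling in details the paper leaves implicit. But your proof is not complete, and you say so yourself: the ``matching'' claim---that the fillings of frontier $1$-cycles can be arranged so that $\#\{2\text{-cells}\}-b_1(\text{$1$-skeleton})$ is non-increasing along the enlargements---is exactly the unproven step, and it is doubtful as formulated. Each time you adjoin bounding $2$-chains for classes in $\ker\bigl(H_1(Y)\to H_1(X_H)\bigr)$, the new $2$-cells can create new $2$-cycles and the enlarged $1$-skeleton can carry new frontier $1$-cycles; nothing forces your deficit to be monotone, and the enlargement process has no evident termination or regularization. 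So as written there is a genuine gap.

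The paper closes this gap with a different and much simpler device: it chooses the exhaustion $Y_1\subseteq Y_2\subseteq\cdots$ of the cover so that each inclusion $Y_i\hookrightarrow Y_{i+1}$ induces a surjection on fundamental groups (a condition on \emph{consecutive} inclusions, not merely surjectivity of each $\pi_1(Y_i)\to H$; this is where finite generation of $H$ enters). Granting such an exhaustion, no matching of $2$-cells against killed $1$-cycles is needed: surjectivity makes $b_1(Y_i)$ non-increasing, hence eventually constant and equal to $b_1(X_H)$, since homology commutes with direct limits; because the complexes are $2$-dimensional, $H_2$ is the space of $2$-cycles, so the maps $H_2(Y_i;\mathbb{Q})\to H_2(Y_{i+1};\mathbb{Q})$ are injective and $b_2(X_H)=\lim_i b_2(Y_i)$; and each $\chi(Y_i)\leq 0$ by the nonpositive-immersions corollary (your per-stage bound, packaged for the whole $2$-complex rather than its $1$-skeleton). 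Passing to the limit in $b_2(Y_i)\leq b_1(Y_i)-1$ then gives $b_2(X_H)\leq b_1(X_H)-1$ at once, and your Hopf reduction (or Lyndon's theorem, as in the paper) transfers this to $H$. Your instinct that the exhaustion must be arranged with care is not misplaced---the paper asserts the existence of a $\pi_1$-surjective exhaustion without proof---but the thing to arrange is that $H_1$ never grows along the exhaustion, not the cell-counting efficiency you propose. One last small point: for $H=1$ the asserted inequality reads $0\leq -1$ and is false, so the statement implicitly excludes the trivial subgroup; ``vacuous'' is not the right dismissal.
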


\begin{proof}
  Let $X$ be the presentation complex for $G$ and let $Y\immerses X$
  be a covering map corresponding to $H$.  Choose an exhaustion of $Y$
  by finite complexes
  \[
  Y_1\subseteq Y_2\subseteq \ldots \subseteq Y_n\subseteq \ldots \subseteq Y
  \]
  such that each inclusion $Y_i\into Y_{i+1}$ induces a surjection on
  fundamental groups. No $Y_i$ is simply connected and so, since $X$
  has non-positive immersions, $\chi(Y_i)\leq 0$ for all $i$.  Since
  homology commutes with limits, it follows that $b_2(Y)\leq b_1(Y)-1$
  as claimed.  Since $Y$ is aspherical by Lyndon's theorem
  \cite{lyndon_cohomology_1950}, the claimed inequality for $H$
  follows.
\end{proof}

Our proof of Theorem~\ref{maintheorem} was inspired by the proof of
the following theorem of Duncan and Howie. In particular, the punch
line in Lemma~\ref{lem:reducedrankofsubgroup} is essentially their
proof of \cite[Lemma~3.1]{duncanhowie}.

The \emph{genus} of an element $w$ in a free group $F$ is the minimal
number $g$ so that $w=\Pi_{i=1}^g\left[x_i,y_i\right]$ has a solution
in $F$, or equivalently, the minimal genus of a once-holed surface
mapping into a graph representing $F$ with boundary $w$.

\begin{theorem}[{\emph{c.f.}~\cite[Corollary~5.2]{duncanhowie}}]
  \label{thm:duncanhowie}
  Let $w$ be an indivisible element in a free group $F$. Then the
  genus of $w^m$ is at least $m/2$.
\end{theorem}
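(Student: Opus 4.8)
The plan is to realize the genus geometrically and then run a Euler-characteristic count on the resulting surface, with indivisibility entering exactly once, at the very end, to rule out a degenerate configuration. First I would fix a graph $\Gamma$ with $\pi_1\Gamma\cong F$ (a rose) and use the surface characterization of genus to choose a once-holed surface $S$ of genus $g=\mathrm{genus}(w^m)$ together with a map $f\colon S\to\Gamma$ whose restriction to $\partial S$ represents $w^m$. Then $-\chi(S)=2g-1$, so the theorem is equivalent to the inequality $-\chi(S)\geq m-1$. Let $H=\img f_*\leq F$; since $\pi_1 S$ is free of rank $2g$ we have $\rk H\leq 2g$, so the reduced rank of $H$, which equals $-\chi$ of its Stallings core graph $\Gamma_H\immerses\Gamma$, is at most $2g-1$. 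The $m$ copies of $w$ along $\partial S$ lift to $\Gamma_H$, and I would phrase the entire argument inside $\Gamma_H$, so that Lemma~\ref{lem:reducedrankofsubgroup} applies.

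Next I would put $f$ in taut position and analyze the preimage $\Pi=f^{-1}(\text{midpoints of the edges of }\Gamma)$. After homotopy this is a properly embedded $1$-submanifold of $S$; discarding inessential circles and boundary-parallel arcs only decreases complexity, so I may assume $\Pi$ is a union of essential arcs cutting $S$ into polygonal regions, each mapping into the star of a single vertex. Counting the cells of the resulting decomposition---the arc endpoints on $\partial S$ are exactly the $m\,|w|$ crossings of edge-midpoints, each arc has two endpoints, and each region is a disk---expresses $-\chi(S)$ as a signed count of arcs and regions. Feeding this into combinatorial Gauss--Bonnet, with each region assigned curvature according to its number of sides, reduces the desired inequality $-\chi(S)\geq m-1$ to a lower bound on the combinatorial complexity of the regions: I must show that tautness forbids monogon and bigon regions, so that the negative curvature concentrated at the $m$ seams between consecutive copies of $w$ is not cancelled.

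The main obstacle is precisely this last local statement, which is the content of the Duncan--Howie punch line imported into Lemma~\ref{lem:reducedrankofsubgroup}: it is here, and only here, that indivisibility of $w$ is used. A bigon region corresponds to two arcs bounding a disk mapping trivially, that is, to a backtrack or cancellation in the boundary word read off $\Gamma_H$; were such a region present one could either homotope $f$ to reduce $-\chi(S)$, contradicting minimality of the genus, or fold the loop of $w$ in $\Gamma_H$, forcing $w$ to be a proper power and contradicting indivisibility. Establishing that this dichotomy is exhaustive---that every low-complexity region yields either a complexity-reducing homotopy or a proper-power factorization of $w$---is the delicate step, and I expect it to require a careful analysis of how the stacking of the arcs across each edge of $\Gamma_H$ interacts with the $\zee/m$ symmetry of the cyclic word $w^m$. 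Granting it, the Gauss--Bonnet count yields $-\chi(S)\geq m-1$, hence $g\geq m/2$.
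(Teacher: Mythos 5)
Your setup (taut map, midpoint preimages, the Stallings graph of $\img f_*$, the observation $-\chi(\Gamma_H)\leq 2g-1$) matches the opening of the paper's deduction, but the proof has a genuine gap, and it sits exactly where you write ``Granting it.'' The step you defer is not a delicate-but-routine verification --- it is the entire content of the theorem --- and the form in which you propose to prove it is wrong on two counts. First, ruling out monogons and bigons does not imply $-\chi(S)\geq m-1$: writing $k_R$ for the number of arc-sides of a region $R$, the decomposition gives $-\chi(S)=\sum_R(k_R/2-1)$, and the hypothesis $k_R\geq 3$ only yields $-\chi(S)\geq \sum_R k_R/6=m|w|/6$, which is weaker than $m-1$ precisely in the critical short-word cases (e.g.\ $w=[a,b]$, where the genus of $w^m$ really is about $m/2$). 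Second, bigon regions cannot be forbidden anyway: a rectangle cut off by two parallel arcs corresponds to two occurrences in the cyclic word $w^m$ of matching vertex transitions, which exist for every $w$ once $m\geq 2$; such regions are compatible with tautness and genus-minimality, admit no complexity-reducing homotopy, and have nothing to do with $w$ being a proper power. Indeed, indivisibility enters the paper's argument elsewhere entirely: it makes the immersed loop $S^1\to\Gamma$ primitive, which is what Lemma~\ref{lem:lifttoembedding} needs (via the cyclic tower argument) to produce a stacking. The whole point of the stacking/orderability method is that one does \emph{not} forbid parallel arcs; instead one counts only the extremal arcs over each edge --- those visible from above or below --- and the pigeonhole argument of Lemma~\ref{lem:reducedrankofsubgroup} shows there are at least $\deg\sigma$ of them unless the pullback immersion is reducible. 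You cite that lemma as a black box containing the punch line, but you never construct a stacking, never verify goodness, and never connect your arc decomposition to its hypotheses, so it does no work in your write-up.

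The paper's own proof sidesteps all of this by quoting Theorem~\ref{maintheorem} rather than re-proving it on the surface. After the homotopy making midpoint preimages arcs and circles, every edge of $\Gamma'$ is traversed an even number of times by $\partial\Sigma$; deleting the unused edges makes the pullback immersion $\Lambda'$ irreducible, so Theorem~\ref{maintheorem} applies and gives $m\leq\deg\sigma\leq-\chi(\Gamma')$, while surjectivity of $\pi_1\Sigma\onto\pi_1\Gamma'$ gives $-\chi(\Gamma')\leq-\chi(\Sigma)=2g-1$, which is even slightly stronger than $g\geq m/2$. If you want to salvage your outline, discard the monogon/bigon dichotomy and the Gauss--Bonnet count: your even-traversal observation already yields irreducibility, and the statement you then need is exactly Theorem~\ref{maintheorem}.
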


Theorem~\ref{thm:duncanhowie} of course follows from
Theorem~\ref{maintheorem}. Indeed, let $\Gamma$ be a graph
representing $F$ and consider a map from a one-holed surface $\Sigma$
of genus $g$ to $\Gamma$ with boundary $w^m$.  After modifying the map
by a homotopy, we may assume that the preimages of midpoints of edges
are properly embedded arcs and circles in $\Sigma$; in particular,
$w^m$ traverses each edge either at least twice or not at all.
Consider an immersion $\Gamma'\immerses\Gamma$ representing the image
of $\pi_1(\Sigma)$ in $G=\pi_1(\Gamma)$, so the map $\Sigma\to\Gamma$
lifts to $\Gamma'$.  By removing edges inductively, we may assume that
$\Gamma'$ is irreducible.  But now
\[
m\leq -\chi(\Gamma')\leq-\chi(\Sigma)
\]
where the first inequality follows from Theorem \ref{maintheorem} and
the second follows the fact that the map from $\Sigma$ to $\Gamma'$ is
surjective on fundamental groups.

While this work was in preparation, we learned that Helfer and Wise have also proved the $W$-cycles conjecture \cite{helfer_counting_2014}.

\section{Stackings}

\subsection{Computing the characteristic of a free group}

\begin{definition}
  Let $\Gamma$ be a finite graph, let $\mathbb{S}$ be a disjoint union
  of circles, and let $\Lambda\colon\mathbb{S}\immerses\Gamma$ be a
  continuous map.  Consider the trivial $\RR$-bundle
  $\pi\colon\Gamma\times \RR\to \Gamma$. A \emph{stacking} is an
  embedding $\hat\Lambda\colon\ess\into\Gamma\times\RR$ such that
  $\pi\hat\Lambda=\Lambda$.
\end{definition}

Although this definition is very simple, it leads to a natural way of
estimating the Euler characteristic of a graph.

Let $\pi$ and $\iota$ be the projections of $\Gamma\times\RR$ to
$\Gamma$ and $\RR$, respectively. Let
\[
\AA_{\hat\Lambda}=\{x\in \ess\mid\forall y\neq x\,\,
(\Lambda(x)=\Lambda(y)\Rightarrow\iota(\hat\Lambda(x))>\iota(\hat\Lambda(y)))\}
\]
and
\[
\BB_{\hat\Lambda}=\{x\in\ess\mid\forall y\neq x\,\,
(\Lambda(x)=\Lambda(y)\Rightarrow\iota(\hat\Lambda(x))<\iota(\hat\Lambda(y)))\}
\]
Intuitively, $\AA_{\hat\Lambda}$ is the set of points of
$\hat{\Lambda}(\ess)$ that one sees if one looks at
$\hat{\Lambda}(\ess)$ from above, and likewise $\BB_{\hat\Lambda}$
is the set of points of $\hat{\Lambda}(\ess)$ that one sees from
below.

Henceforth, assume that $\Lambda:\ess\to\Gamma$ is an immersion.  The
stacking $\hat\Lambda$ is called \emph{good} if $\AA_{\hat\Lambda}$
and $\BB_{\hat\Lambda}$ each meet every connected component of
$\mathbb{S}$.  For brevity, we will call a subset $s\subseteq \ess$ an
\emph{open arc} if it is connected, simply connected, open, and a
union of vertices and interiors of edges.

\begin{lemma}\label{areopenarcs}
  If $\Lambda$ is an immersion then each connected component of
  $\AA_{\hat\Lambda}$ or $\BB_{\hat\Lambda}$ is either a connected
  component of $\mathbb{S}$ or an open arc in $\mathbb{S}$.
\end{lemma}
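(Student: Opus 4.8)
The plan is to show that $\AA_{\hat\Lambda}$ is an open subset of $\ess$ which meets each edge of $\ess$ in either the whole interior or nothing, and whose frontier points are all vertices; the same argument applies verbatim to $\BB_{\hat\Lambda}$ after reversing the inequality. Since an open subset of a disjoint union of circles has components that are either whole circles or open intervals, and since the two properties above force the endpoints of such an interval to be vertices, every component is a whole circle or an open arc, as required. After subdividing $\Gamma$ and $\ess$ --- which changes neither the sets $\AA_{\hat\Lambda},\BB_{\hat\Lambda}$ nor the conclusion --- I may assume $\Lambda$ is simplicial and maps each closed edge homeomorphically onto an edge of $\Gamma$; recall also that each vertex of $\ess$ has degree two.

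First I would analyse $\AA_{\hat\Lambda}$ over the interior $e^{\circ}$ of a single edge $e$ of $\Gamma$. The edges of $\ess$ mapping onto $e$ give finitely many ``sheets,'' each the graph of a continuous function $e^{\circ}\to\RR$ obtained from $\iota\hat\Lambda$ via the identification $\Lambda|_{e'}$. Because $\hat\Lambda$ is an embedding these graphs are pairwise disjoint, so the functions are nowhere equal; as $e^{\circ}$ is connected, each pairwise difference has constant sign and the pointwise order of the sheets is constant across $e^{\circ}$. Consequently exactly one sheet lies strictly above the others throughout $e^{\circ}$, and a point over $e^{\circ}$ lies in $\AA_{\hat\Lambda}$ precisely when it lies on that top sheet. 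Thus for each edge $e'$ of $\ess$ the interior ${e'}^{\circ}$ is either contained in $\AA_{\hat\Lambda}$ or disjoint from it.

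Next I would treat the vertices. Suppose a vertex $v$ lies in $\AA_{\hat\Lambda}$, so $\iota\hat\Lambda(v)$ strictly exceeds $\iota\hat\Lambda(v')$ for every other $v'$ in the finite fibre over $w=\Lambda(v)$ (the heights are distinct because $\hat\Lambda$ is injective). Using that $\Lambda$ is a proper, finite-to-one simplicial map, the preimage of a small neighbourhood of $w$ is a union of stars of the fibre vertices, so any point $y$ with $\Lambda(y)$ near $w$ lies near some fibre vertex $v'$ and has height near $\iota\hat\Lambda(v')$. Hence for a point $x$ on an edge incident to $v$ and sufficiently close to $v$, the height of $x$ is close to $\iota\hat\Lambda(v)$ and therefore strictly above the height of every $y\neq x$ with $\Lambda(y)=\Lambda(x)$; thus $x\in\AA_{\hat\Lambda}$. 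Combined with the previous paragraph, the interiors of both edges incident to $v$ lie in $\AA_{\hat\Lambda}$, so $v$ has an open neighbourhood inside $\AA_{\hat\Lambda}$. This proves $\AA_{\hat\Lambda}$ is open and that a vertex in $\AA_{\hat\Lambda}$ is never an endpoint of a component. Finally, if a component is not a whole circle it is an open interval whose endpoints, being limits of $\AA_{\hat\Lambda}$ lying outside $\AA_{\hat\Lambda}$, cannot lie in an edge interior --- that would force the whole interior, hence the endpoint, into $\AA_{\hat\Lambda}$ --- so they are vertices; the component is then a union of edge interiors and vertices, i.e.\ an open arc.

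The main obstacle is the vertex argument: I must ensure that the competitors in the fibre over points near $w$ come only from the fibre over $w$ itself, which is exactly where properness and local injectivity of the immersion are used, and I must use the embedding property to guarantee distinct heights (no ties) so that the strict inequalities defining $\AA_{\hat\Lambda}$ propagate continuously from $v$ to nearby points. The edge-interior step is comparatively routine once the observation that the graphs of the sheets cannot cross is in place.
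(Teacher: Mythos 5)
Your proof is correct and takes essentially the same route as the paper's: the paper's (very terse) argument rests on exactly the two facts you establish, namely that $\AA_{\hat\Lambda}$ is open and that it meets each edge interior in all or nothing, and you have simply supplied the non-crossing-sheets and vertex-star details that the paper leaves to the reader. No gap.
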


\begin{proof}
  It suffices to prove the lemma for $\AA_{\hat\Lambda}$.  Let
  $s\subseteq\ess$ be a connected component of $\AA_{\hat\Lambda}$.
  It follows from the definition that $s$ is open. Note also that if
  one point $p$ in the interior of an edge $e$ is contained in
  $\AA_{\hat\Lambda}$ then the whole interior of $e$ is contained in
  $\AA_{\hat\Lambda}$. This completes the proof.
\end{proof}

The next lemma characterizes reducible maps in terms of a stacking; in
particular, reducibility is reduced to non-disjointness of
$\AA_{\hat{\Lambda}}$ and $\BB_{\hat{\Lambda}}$.

\begin{lemma}\label{reducible}
  If $\hat{\Lambda}$ is a stacking of an immersion
  $\Lambda:\ess\to\Gamma$, then $\AA_{\hat{\Lambda}}\cap
  \BB_{\hat{\Lambda}}$ contains the interior of an edge if and only if
  $\Lambda$ is reducible.  If $\hat{\Lambda}$ is a good stacking and
  $\AA_{\hat{\Lambda}}$ or $\BB_{\hat{\Lambda}}$ contains a circle
  then $\hat{\Lambda}$ is reducible.
\end{lemma}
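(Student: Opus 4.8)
The plan is to prove both biconditionals by unwinding the definitions of $\AA_{\hat\Lambda}$, $\BB_{\hat\Lambda}$, and reducibility, tracking how points of $\ess$ map to edges of $\Gamma$. Recall that reducibility of $\Lambda$ means there is an edge $e$ of $\Gamma$ traversed at most once by $\Lambda$. Since $\Lambda$ is an immersion, the preimage $\Lambda^{-1}(\mathring e)$ of the interior of any edge $e$ is a disjoint union of open arcs in $\ess$, each mapping homeomorphically onto $\mathring e$; the number of such arcs is exactly the number of times $\Lambda$ traverses $e$. So ``traversed at most once'' means this preimage is either empty or a single open arc.

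For the first statement, I would argue as follows. First suppose $\AA_{\hat\Lambda}\cap\BB_{\hat\Lambda}$ contains the interior of an edge of $\ess$, say an open arc $s$ mapping to $\mathring e$ for some edge $e$ of $\Gamma$. Pick any point $x\in s$. Since $x\in\AA_{\hat\Lambda}$, there is no $y\neq x$ with $\Lambda(y)=\Lambda(x)$ and $\iota(\hat\Lambda(y))>\iota(\hat\Lambda(x))$; since $x\in\BB_{\hat\Lambda}$, there is likewise no such $y$ lying below. Hence $x$ is the \emph{only} preimage of $\Lambda(x)\in\mathring e$, so $\Lambda$ traverses $e$ exactly once and is therefore reducible. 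Conversely, if $\Lambda$ is reducible, there is an edge $e$ traversed at most once; if it is traversed exactly once, the unique preimage arc $s$ consists of points each having no other preimage, so the vacuous quantifier places every point of $s$ in both $\AA_{\hat\Lambda}$ and $\BB_{\hat\Lambda}$, giving the desired interior of an edge. (The ``traversed zero times'' case is excluded here since we need an actual arc; I would note that if \emph{every} traversed edge occurs at least twice one must have some edge traversed exactly once for reducibility to contribute an edge to the intersection, and handle the bookkeeping accordingly.)

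For the second statement, suppose $\hat\Lambda$ is good and, without loss of generality by the symmetry between $\AA$ and $\BB$, suppose $\AA_{\hat\Lambda}$ contains a whole circle $C$, a connected component of $\ess$. By Lemma~\ref{areopenarcs}, $C$ is then a connected component of $\AA_{\hat\Lambda}$. Goodness requires that $\BB_{\hat\Lambda}$ also meet $C$, so pick $x\in C\cap\BB_{\hat\Lambda}$. Since $x\in C\subseteq\AA_{\hat\Lambda}$ as well, we have $x\in\AA_{\hat\Lambda}\cap\BB_{\hat\Lambda}$. The point $x$ lies in the interior of some edge (or I would argue one can find such a point in the intersection near $x$, using that the intersection is open and that the edge-interiors are dense), and then the first part of the lemma applies to conclude that $\Lambda$ is reducible.

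The main obstacle I anticipate is the careful treatment of vertices versus edge-interiors in the first statement: the sets $\AA_{\hat\Lambda}$ and $\BB_{\hat\Lambda}$ are defined pointwise and may contain isolated vertices whose fibres behave differently from edge-interiors, so the phrase ``contains the interior of an edge'' must be handled precisely rather than merely ``contains a point.'' I would lean on Lemma~\ref{areopenarcs}, which guarantees each component of $\AA_{\hat\Lambda}$ or $\BB_{\hat\Lambda}$ is either a full circle or an open arc (a union of vertices and edge-interiors), to ensure that whenever the intersection is nonempty and not purely vertex-like it genuinely contains an edge-interior, matching the traversal count in the definition of reducibility.
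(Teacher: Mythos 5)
Your proof is correct and follows essentially the same route as the paper: the first assertion by unwinding the definitions (a point of $\AA_{\hat\Lambda}\cap\BB_{\hat\Lambda}$ has no other preimage under $\Lambda$, so its edge is traversed exactly once), and the second by combining goodness with Lemma~\ref{areopenarcs} to locate an edge interior of the circle lying in both $\AA_{\hat\Lambda}$ and $\BB_{\hat\Lambda}$. The wrinkle you flag in the converse of the first assertion is real but unfixable as stated --- an edge traversed \emph{zero} times makes $\Lambda$ reducible while contributing nothing to $\AA_{\hat\Lambda}\cap\BB_{\hat\Lambda}$ (e.g.\ a circle wrapping twice around one petal of a wedge of two circles) --- so this is an imprecision in the lemma's statement itself, glossed over equally by the paper's ``immediate from the definitions''; it is harmless downstream, since only the forward implication and the second assertion are ever used (in Lemma~\ref{lem:reducedrankofsubgroup}, where $\Lambda'$ is moreover assumed surjective).
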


\begin{proof}
  To first assertion is immediate from the definitions.  It suffices
  to prove the second assertion for $\AA_{\hat{\Lambda}}$.  Let $S$ be
  a component of $\ess$ contained in $\AA_{\hat{\Lambda}}$. Since
  $\ess$ is good, there is an edge $e$ of $S$ contained in
  $\BB_{\hat{\Lambda}}$. Therefore, $e$ is contained in both
  $\AA_{\hat{\Lambda}}$ and $\BB_{\hat{\Lambda}}$.  It follows that
  $e$ is traversed exactly once $\hat{\Lambda}$, so $\hat{\Lambda}$ is
  reducible.
\end{proof}

The final lemma of this section is completely elementary, but is the
key observation in the proof.  It asserts that number of open arcs in
$\AA_{\Lambda}$ or $\BB_{\Lambda}$ computes the Euler characteristic
of the image of $\Lambda$.  This is illustrated in Figure
\ref{fig:computerank}.

\begin{figure}[ht]
  \centerline{\includegraphics[width=.8\textwidth]{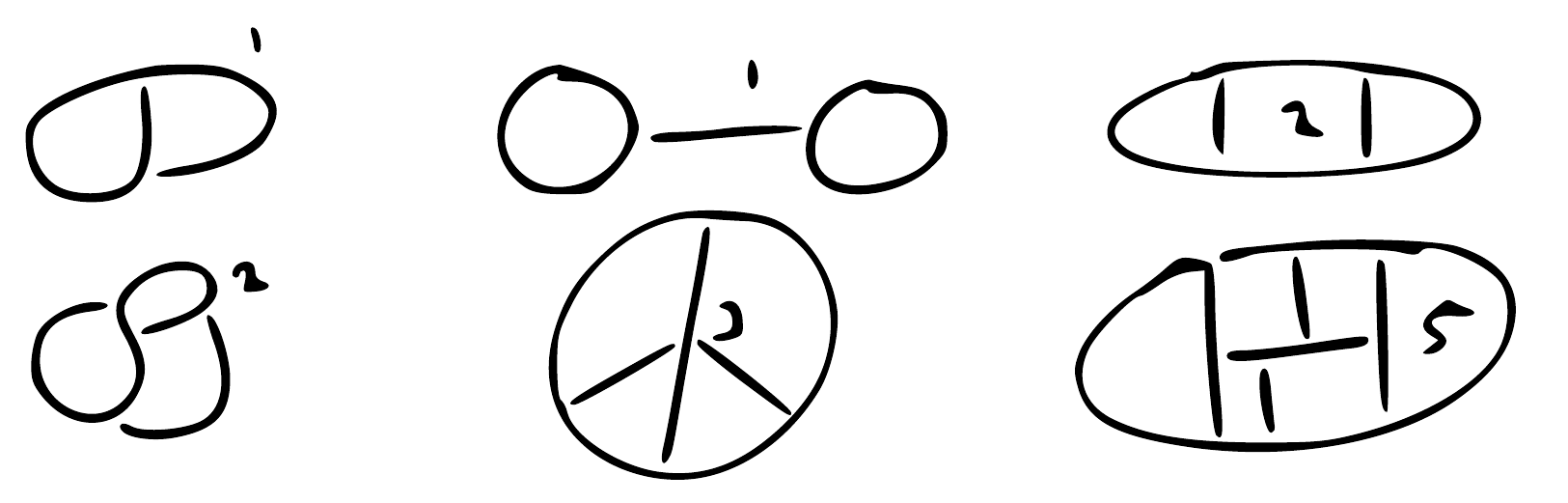}}
    \caption{How to compute the rank of a free group.}
  \label{fig:computerank}
\end{figure}

\begin{lemma}
  \label{lem::computerank}
  Let $\hat{\Lambda}:\ess\to\Gamma\times\RR$ be a stacking of a
  surjective immersion $\Lambda:\ess\to\Gamma$.  The number of open
  arcs in $\AA_{\hat\Lambda}$ or $\BB_{\hat\Lambda}$ is equal to
  $-\chi(\Gamma)$.
\end{lemma}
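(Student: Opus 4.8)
The plan is to prove the statement for $\AA_{\hat\Lambda}$; the argument for $\BB_{\hat\Lambda}$ is identical after reflecting the $\RR$-coordinate. Write $V$ and $E$ for the numbers of vertices and edges of $\Gamma$, so that $-\chi(\Gamma)=E-V$, and give $\ess$ the cell structure pulled back from $\Gamma$ by $\Lambda$. The heart of the argument is the following pair of local observations: over every edge of $\Gamma$ exactly one edge of $\ess$ lies in $\AA_{\hat\Lambda}$, and over every vertex of $\Gamma$ exactly one vertex of $\ess$ lies in $\AA_{\hat\Lambda}$. Granting these, I would count $-\chi(\Gamma)$ as an Euler-characteristic difference for $\AA_{\hat\Lambda}$.

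First I would establish the two local facts. Fix an edge $e$ of $\Gamma$. Because $\Lambda$ is surjective, $e$ is traversed at least once, and because $\hat\Lambda$ is an embedding of an immersed $1$-complex the finitely many strands of $\ess$ lying over the interior of $e$ are pairwise disjoint arcs, hence are linearly ordered by the height function $\iota\hat\Lambda$; this order is constant along $e$ since the strands cannot cross. Exactly the top strand satisfies the strict inequality defining $\AA_{\hat\Lambda}$, giving exactly one edge of $\ess$ over $e$ in $\AA_{\hat\Lambda}$, and summing over edges yields $E$. Similarly, fix a vertex $v$ of $\Gamma$; surjectivity gives at least one vertex of $\ess$ over $v$, and since $\hat\Lambda$ is an embedding the points of $\hat\Lambda(\ess)$ in the fibre $\{v\}\times\RR$ have distinct heights, so exactly the highest lies in $\AA_{\hat\Lambda}$. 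Summing over vertices yields $V$.

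It remains to convert these counts into the number of open arcs. By Lemma~\ref{areopenarcs} every component of $\AA_{\hat\Lambda}$ is a circle or an open arc, and $\AA_{\hat\Lambda}$ is open; combined with the observation in the proof of Lemma~\ref{areopenarcs} that an edge of $\ess$ meeting $\AA_{\hat\Lambda}$ lies entirely in $\AA_{\hat\Lambda}$, this shows that a vertex of $\ess$ lies in $\AA_{\hat\Lambda}$ precisely when it is an interior vertex of the component containing it. Thus an open-arc component traversing $j$ edges contains exactly $j-1$ vertices of $\AA_{\hat\Lambda}$ and contributes $j-(j-1)=1$ to the difference between the number of edges of $\ess$ in $\AA_{\hat\Lambda}$ and the number of vertices of $\ess$ in $\AA_{\hat\Lambda}$, while a circle component with $j$ edges contains $j$ such vertices and contributes $0$. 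Summing over components, this difference equals the number of open arcs; by the previous paragraph it also equals $E-V=-\chi(\Gamma)$, as required.

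The step I expect to be most delicate is the vertex bookkeeping in the last paragraph: one must be sure that every vertex of $\ess$ lying in $\AA_{\hat\Lambda}$ is genuinely interior to its component, so that no component is a lone vertex and no arc-endpoint is miscounted. This is exactly where the openness of $\AA_{\hat\Lambda}$ is used, since openness forces both edges incident to such a vertex to meet, and hence to lie in, $\AA_{\hat\Lambda}$. The local facts themselves rest only on the no-crossing property of the embedding $\hat\Lambda$ together with surjectivity of $\Lambda$, and should be routine once stated carefully.
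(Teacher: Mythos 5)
Your proof is correct and follows essentially the same route as the paper: both arguments rest on the observation that the stacking selects exactly one top strand of $\ess$ over each edge of $\Gamma$ and one top vertex over each vertex, with openness of $\AA_{\hat\Lambda}$ forcing top strands to continue through top vertices. The only difference is bookkeeping: you count the cells of $\ess$ lying in $\AA_{\hat\Lambda}$ and form the difference $E-V$, whereas the paper counts the $v(x)-2$ arc-ends lying over each vertex $x$ of $\Gamma$ and halves the total $\sum_x (v(x)-2)=2E-2V$; the two counts agree by the handshake identity.
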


\begin{proof}
  As usual, it suffices to prove the lemma for $\AA_{\hat\Lambda}$.
  Let $x$ be a vertex of $\Gamma$ of valence $v(x)$.  Because
  $\Lambda$ is surjective, exactly $v-2$ edges incident at $x$ are
  covered by open arcs of $\AA_{\hat{\Lambda}}$ that end at $x$.
  Therefore, the number of open arcs is
  \[
  \frac{1}{2}\sum_{x\in V(\Gamma)} (v(x)-2)
  \]
  which is easily seen to be $-\chi(\Gamma)$.
\end{proof}

\subsection{Computing the characteristic of a subgroup}

As in the previous section, $\Gamma$ is a finite graph,
$\Lambda:\ess\immerses\Gamma$ is an immersion and
$\hat{\Lambda}:\ess\into\Gamma\times\RR$ is a stacking. Consider now
an immersion of finite graphs $\rho:\Gamma'\to\Gamma$, and let $\ess'$
be the circular components of the fibre product
$\ess\times_{\Gamma}\Gamma'$, which is equipped with a covering map
$\sigma:\ess'\to\ess$ and an immersion $\Lambda':\ess'\to\Gamma'$.  In
order to prove Theorem \ref{maintheorem}, we would like to estimate
the characteristic of $\Gamma'$ in terms of $\hat\Lambda$.

The stacking $\hat{\Lambda}$ of $\Lambda$ naturally pulls back to a
stacking $\hat{\Lambda}'$ of $\Lambda'$.  More precisely, there is a
natural isomorphism
\[
(\Gamma\times\RR)\times_\Gamma\Gamma'\cong\Gamma'\times\RR
\] 
and the universal property of the fibre bundle defines a map
$\hat{\Lambda}':\ess'\to\Gamma\times\RR$, so we have the following
commutative diagram.

\[
\xymatrix{
   & \Gamma'\times\RR  \ar@{>}[dd]^(.35){\pi'}\ar@{>}[rr]^{\hat{\rho}} & & \Gamma\times \RR \ar@{>}[dd]^\pi \\
  \ess'\ar@{>}[ur]^{\hat{\Lambda}'}\ar@{>}[dr]^{\Lambda'}\ar@{>}[rr]^(.35)\sigma  & &\ess \ar@{>}[ur]^{\hat{\Lambda}}\ar@{>}[dr]^\Lambda  &  \\
     & \Gamma'\ar@{>}[rr]^\rho	& & \Gamma 
}
\]
\begin{lemma}
  If $\hat{\Lambda}$ is a stacking then $\hat{\Lambda}'$ is also a
  stacking.  Furthermore, if $\hat{\Lambda}$ is good then
  $\hat{\Lambda}'$ is also good.
\end{lemma}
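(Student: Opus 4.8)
The plan is to work with the explicit description of $\hat{\Lambda}'$ afforded by the isomorphism $(\Gamma\times\RR)\times_\Gamma\Gamma'\cong\Gamma'\times\RR$. A point of $\ess'$ is a pair $(s,g)$ with $s\in\ess$, $g\in\Gamma'$ and $\Lambda(s)=\rho(g)$; under these identifications $\sigma(s,g)=s$, $\Lambda'(s,g)=g$, and the universal property gives $\hat{\Lambda}'(s,g)=(g,\iota\hat{\Lambda}(s))$. I would record the two facts that drive everything: first, $\pi'\hat{\Lambda}'(s,g)=g=\Lambda'(s,g)$, so $\pi'\hat{\Lambda}'=\Lambda'$; and second, writing $\iota$ for the $\RR$-coordinate on both bundles (the isomorphism preserves it), one has $\iota\hat{\Lambda}'(s,g)=\iota\hat{\Lambda}(\sigma(s,g))$. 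In words, the height of a point of $\ess'$ under $\hat{\Lambda}'$ equals the height of its image in $\ess$ under $\hat{\Lambda}$.

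To show $\hat{\Lambda}'$ is a stacking it remains to check that it is an embedding. I would first verify injectivity: if $\hat{\Lambda}'(s_1,g_1)=\hat{\Lambda}'(s_2,g_2)$ then $g_1=g_2$ and $\iota\hat{\Lambda}(s_1)=\iota\hat{\Lambda}(s_2)$; since also $\pi\hat{\Lambda}(s_i)=\Lambda(s_i)=\rho(g_i)$ agree, the two points $\hat{\Lambda}(s_1)$ and $\hat{\Lambda}(s_2)$ coincide in $\Gamma\times\RR$, whence $s_1=s_2$ because $\hat{\Lambda}$ is injective. As $\Gamma$, $\Gamma'$ and $\ess$ are finite, $\ess'$ is a compact (finite) disjoint union of circles, and a continuous injection from a compact space to the Hausdorff space $\Gamma'\times\RR$ is a homeomorphism onto its image. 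Hence $\hat{\Lambda}'$ is an embedding, and therefore a stacking.

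For the goodness claim the key step is the containment $\sigma^{-1}(\AA_{\hat{\Lambda}})\subseteq\AA_{\hat{\Lambda}'}$, and likewise for $\BB$. Fix $x'=(s,g)$ with $s=\sigma(x')\in\AA_{\hat{\Lambda}}$, and take any $y'=(s',g')\neq x'$ with $\Lambda'(y')=\Lambda'(x')$, that is $g'=g$. Then $\Lambda(s')=\rho(g')=\rho(g)=\Lambda(s)$ and $s'\neq s$ (otherwise $x'=y'$), so membership of $s$ in $\AA_{\hat{\Lambda}}$ forces $\iota\hat{\Lambda}(s)>\iota\hat{\Lambda}(s')$; by the height identity this is exactly $\iota\hat{\Lambda}'(x')>\iota\hat{\Lambda}'(y')$, so $x'\in\AA_{\hat{\Lambda}'}$. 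Now given a component $C'$ of $\ess'$, its image $\sigma(C')$ is a component $C$ of $\ess$, and $\sigma|_{C'}\colon C'\to C$ is a surjective covering; since $\hat{\Lambda}$ is good, $\AA_{\hat{\Lambda}}$ meets $C$, and pulling a point of $\AA_{\hat{\Lambda}}\cap C$ back along $\sigma|_{C'}$ produces a point of $\sigma^{-1}(\AA_{\hat{\Lambda}})\cap C'\subseteq\AA_{\hat{\Lambda}'}\cap C'$. The same argument applies to $\BB_{\hat{\Lambda}}$, so both $\AA_{\hat{\Lambda}'}$ and $\BB_{\hat{\Lambda}'}$ meet every component of $\ess'$, and $\hat{\Lambda}'$ is good.

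I expect no serious obstacle: the content is the bookkeeping identity $\iota\hat{\Lambda}'=\iota\hat{\Lambda}\sigma$, and the only place requiring a little care is injectivity of $\hat{\Lambda}'$, where one must use injectivity of $\hat{\Lambda}$ together with the fact that $\sigma$ and $\Lambda'$ jointly detect a point of the fibre product. Compactness of $\ess'$ then upgrades the injection to an embedding for free.
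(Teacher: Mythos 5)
Your proof is correct and takes essentially the same approach as the paper: the paper disposes of the first assertion as a diagram chase left to the reader, and derives goodness from exactly the containments $\sigma^{-1}(\AA_{\hat\Lambda})\subseteq\AA_{\hat\Lambda'}$ and $\sigma^{-1}(\BB_{\hat\Lambda})\subseteq\BB_{\hat\Lambda'}$ that you establish. You have simply written out in full (the height identity $\iota\hat{\Lambda}'=\iota\hat{\Lambda}\sigma$, injectivity, and compactness upgrading injectivity to an embedding) the details the paper omits.
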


\begin{proof}
  The proof of the first assertion is a diagram chase, which we leave
  as an exercise to the reader.  The second assertion follows
  immediately from the observation that
  $\sigma^{-1}(\AA_{\hat\Lambda})\subseteq\AA_{\hat\Lambda'}$ and
  $\sigma^{-1}(\BB_{\hat\Lambda})\subseteq\BB_{\hat\Lambda'}$.
\end{proof}

The final lemma in this section estimates the Euler characteristic of
$\Gamma'$ using a stacking of the pullback immersion $\Lambda'$.
Since all finitely generated subgroups of free groups can be realized
by immersions of finite graphs, this can be thought of as an estimate
for the rank of a subgroup of a free group; this point of view
motivates the title of this subsection.

\begin{lemma}
  \label{lem:reducedrankofsubgroup}
  If $\hat\Lambda$ is a good stacking then either
  $\Lambda':\ess'\to\Gamma'$ is reducible or
  \[
  -\chi(\Lambda'(\ess'))\geq \deg\sigma
  \]
\end{lemma}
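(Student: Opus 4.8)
The plan is to count the connected components of $\AA_{\hat\Lambda'}$ and show there are at least $\deg\sigma$ of them whenever $\Lambda'$ is irreducible. Since $\Lambda'$ surjects onto its image $\Gamma''=\Lambda'(\ess')$, Lemma~\ref{lem::computerank} applied to the pulled-back stacking $\hat\Lambda'$ gives that the number of open arcs of $\AA_{\hat\Lambda'}$ equals $-\chi(\Gamma'')=-\chi(\Lambda'(\ess'))$. If $\Lambda'$ is not reducible then every edge of $\Gamma''$ is traversed at least twice, and (by Lemma~\ref{reducible} and goodness, or directly from the minimum computation below) $\AA_{\hat\Lambda'}$ contains no whole circle, so by Lemma~\ref{areopenarcs} each of its components is an open arc and the count above is exactly the number of components. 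It therefore suffices to produce at least $\deg\sigma$ such arcs. I would do this one component at a time: taking $\ess=S^1$, each component $C'$ of $\ess'$ covers $S^1$ with some degree $d_{C'}\geq 1$ and $\deg\sigma=\sum_{C'}d_{C'}$, so it is enough to find at least $d_{C'}$ arcs of $\AA_{\hat\Lambda'}$ on each $C'$.

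The key observation is that heights are pulled back. Writing $h=\iota\hat\Lambda$ and $h'=\iota\hat\Lambda'$, the description of $\hat\Lambda'$ through the isomorphism $(\Gamma\times\RR)\times_\Gamma\Gamma'\cong\Gamma'\times\RR$ shows $h'=h\circ\sigma$. Hence on a component $C'$ covering $S^1$ with degree $d=d_{C'}$ the function $h'|_{C'}$ factors through the degree-$d$ covering $\sigma|_{C'}\colon C'\to S^1$; it attains the maximum and the minimum of $h$ once in each of the $d$ fundamental domains, precisely at the $d$ preimages of the maximum (resp.\ minimum) of $h$ on $S^1$.

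The punch line is to locate arcs of $\AA_{\hat\Lambda'}$ at these extrema, which is essentially the argument of \cite[Lemma~3.1]{duncanhowie}. After a generic choice of stacking I may assume the maximum and minimum of $h$ are attained in edge interiors. A point $x'$ at which $h'|_{C'}$ attains the global maximum lies in $\AA_{\hat\Lambda'}$: any $y'\neq x'$ with $\Lambda'(y')=\Lambda'(x')$ has $h'(y')\leq h'(x')$, and equality would give $\hat\Lambda'(y')=\hat\Lambda'(x')$, contradicting that $\hat\Lambda'$ is an embedding. Conversely a point $x'$ at which $h'|_{C'}$ attains the minimum does \emph{not} lie in $\AA_{\hat\Lambda'}$: irreducibility forces the edge carrying $x'$ to be traversed at least twice, so some $y'\neq x'$ has $\Lambda'(y')=\Lambda'(x')$, and then $h'(y')>h'(x')$ since $h'(x')$ is minimal and distinct points with the same $\Lambda'$-image have distinct heights. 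Thus each fundamental domain of $C'$ contains a point in $\AA_{\hat\Lambda'}$ (a maximum) and a point outside it (a minimum), with the same cyclic order in every domain; these $2d$ points alternate around $C'$, forcing at least $d$ distinct arcs. Summing over components yields at least $\deg\sigma$ arcs, that is, $-\chi(\Lambda'(\ess'))\geq\deg\sigma$.

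The main obstacle is exactly this last step: one must prevent two extremal points from lying on a common arc, and this is supplied by the interplay of the periodicity $h'=h\circ\sigma$ with irreducibility—irreducibility is what turns the minima into genuine gaps rather than low points that happen to remain on top. Two technical points remain to be handled: the genericity assumption placing the extrema in edge interiors, arranged by perturbing the stacking; and the reduction to the connected case $\ess=S^1$. For a disconnected $\ess$ the maximum argument must be run against the global maximum of $h$, so one should either restrict to a single circle (the case needed for Theorem~\ref{maintheorem}) or treat each component of $\ess$ separately, comparing only strands lying over that component.
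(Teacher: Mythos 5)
Your argument is correct in the case $\ess=S^1$ (the case actually invoked in the proof of Theorem~\ref{maintheorem}), and at the key step it takes a genuinely different route from the paper's. You produce $\deg\sigma$ distinct arcs of $\AA_{\hat\Lambda'}$ directly, using the extrema of the pulled-back height function $h'=h\circ\sigma$: maxima lie in $\AA_{\hat\Lambda'}$ by the embedding property, minima are excluded by irreducibility, and alternation separates the maxima into distinct arcs. This is closer to Duncan and Howie's original argument in \cite{duncanhowie}. The paper instead argues by pigeonhole and contradiction: goodness supplies an edge $e\subseteq\AA_{\hat\Lambda}$ and an edge $f\subseteq\BB_{\hat\Lambda}$; the inclusion $\sigma^{-1}(\AA_{\hat\Lambda})\subseteq\AA_{\hat\Lambda'}$ places all $\deg\sigma$ preimages of $e$ in arcs of $\AA_{\hat\Lambda'}$; and if two preimages shared an arc $A$, then $A$ would wrap once around the relevant circle of $\ess$ and hence contain a preimage $f'$ of $f$, forcing $f'\in\AA_{\hat\Lambda'}\cap\BB_{\hat\Lambda'}$ and hence reducibility by Lemma~\ref{reducible}. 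Your version buys concreteness but costs you the perturbation step, which does require an argument (it works: by local injectivity of $\Lambda$ the set of pairs $x\neq y$ with $\Lambda(x)=\Lambda(y)$ is compact, so such pairs have height differences bounded below by some $\delta>0$, and a perturbation of size less than $\delta/2$ preserves both the stacking property and goodness).

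The genuine gap is the disconnected case, which is not a side issue: the lemma is stated for $\ess$ a disjoint union of circles, and that is precisely where the goodness hypothesis has content (for $\ess=S^1$ every stacking is good). You flag the problem, but your proposed repair---``comparing only strands lying over that component''---does not work. The set $\AA_{\hat\Lambda'}$ compares $x'$ against \emph{all} $y'$ with $\Lambda'(y')=\Lambda'(x')$, including strands over other components of $\ess$, which may be higher; so the maximum over one component need not lie in $\AA_{\hat\Lambda'}$. If you instead redefine the comparison to use only strands over a fixed component $S$ of $\ess$, two things break: irreducibility of $\Lambda'$ only guarantees that each edge is traversed twice by all of $\ess'$, and the second traversal may lie over a different component, so minima over $S$ are no longer excluded; and the restricted sets are not the sets counted by Lemma~\ref{lem::computerank}, nor do the Euler characteristics of the per-component images sum to $-\chi(\Lambda'(\ess'))$. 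The paper's proof evades all of this because the edges $e$ and $f$ supplied by goodness in each component of $\ess$ serve exactly as surrogate extrema: preimages of $e$ always lie in $\AA_{\hat\Lambda'}$, and preimages of $f$ cannot unless $\Lambda'$ is reducible. Indeed, if you substitute $e$ and $f$ for your maximum and minimum, your alternation count goes through verbatim, with no perturbation and no connectedness assumption, and yields the lemma in full generality.
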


\begin{proof}
  Suppose $\Lambda'$ is not reducible. We may assume that $\Lambda'$
  is surjective.

  Let $e$ be an edge in
  $\AA_{\hat{\Lambda}}$ and consider its $\deg\sigma$ preimages
  $\{e'_j\}$.  Since $\Lambda'$ is not reducible, no component of
  $\AA_{\hat{\Lambda}'}$ is a circle, by Lemma \ref{reducible}, and so
  every $e'_j$ is contained in an open arc of $\AA_{\hat{\Lambda}'}$.

  If $-\chi(\Gamma')< \deg\sigma$ then, by Lemma
  \ref{lem::computerank} and the pigeonhole principle, two distinct
  preimages $e'_i$ and $e'_j$ are contained in the same open arc $A$.
  But then, for any $f$ an edge of $\ess$ contained in
  $\BB_{\hat{\Lambda}}$ (which again exists because $\hat\Lambda$ is
  good), $A$ also contains an edge $f'$ that maps to $f$.  Therefore,
  $\AA_{\hat{\Lambda}'}\cap \BB_{\hat{\Lambda}'}$ contains $f'$, and
  so $\Lambda'$ is reducible by Lemma \ref{reducible}.  See
  Figure~\ref{fig:ranktoosmall}.
\end{proof}

\begin{figure}[h]
\begin{center}
 \centering \def\svgwidth{300pt}
 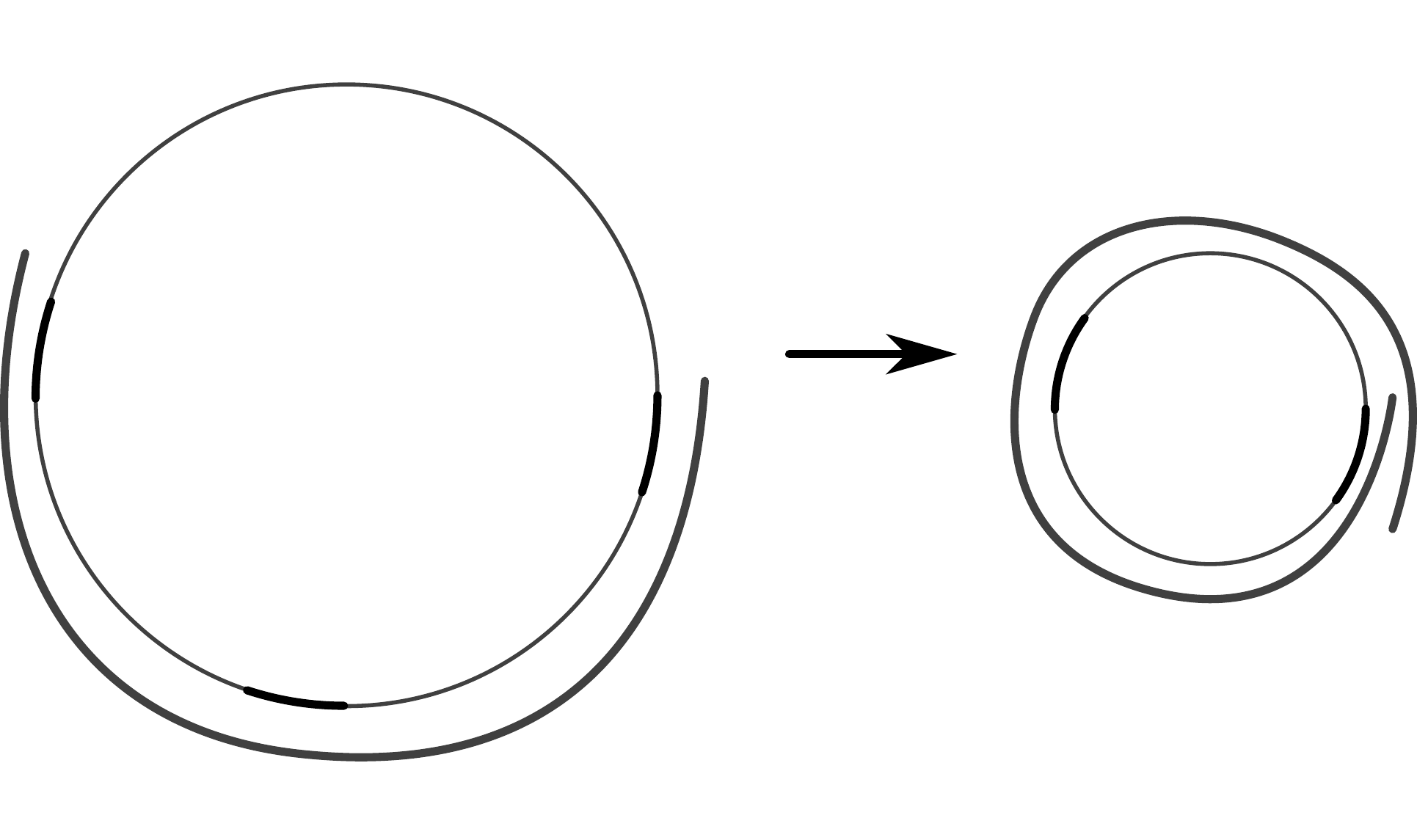
 \caption{If $-\chi(\Gamma')$ is too small in comparison to the sum
    of the degrees then $\Lambda'$ is reducible.}
  \label{fig:ranktoosmall}
  \end{center}
\end{figure}

\section{A tower argument}

In order to apply Lemma \ref{lem:reducedrankofsubgroup} to prove
Theorem \ref{maintheorem}, we need to prove that stackings exist.  The
proof here employs a \emph{cyclic tower argument} of the kind used by
Brodski{\u\i} and Howie to prove that one-relator groups are
right-orderable and locally indicable \cite{brod:order,howie:order}.

\begin{definition}
  Let $X$ be a complex.  A \emph{(cyclic) tower} is the composition of
  a finite sequence of maps
  \[
  X_0\immerses X_{1}\immerses\ldots \immerses X_n=X
  \]
  such that each map $X_i\immerses X_{i+1}$ is either an inclusion of
  a subcomplex or a covering map (resp.\ a normal covering map with
  infinite cyclic deck group).
\end{definition}

One can argue by induction with towers because of the following lemma
of Howie (building on ideas of Papakyriakopoulos and Stallings)
\cite{howie_pairs_1981}.

\begin{lemma}
  Let $Y\to X$ be cellular map of compact complexes. Then there exists
  a maximal (cyclic) tower map $X'\immerses X$ such that $Y\to X$
  lifts to a map $Y\to X'$.
\end{lemma}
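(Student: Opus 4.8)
The plan is to build $X'$ greedily, constructing the tower one arrow at a time from the top down and pushing the lift of $Y$ further down the tower at each stage. I maintain a factorisation $Y\to Z\to X$ in which $Z\immerses X$ is a tower and $f_Z\colon Y\to Z$ is a lift of the given map, starting from $Z=X$, and I repeatedly try to extend the tower by one further map $Z'\immerses Z$ through which $f_Z$ still lifts; the tower is \emph{maximal} when no such nontrivial extension exists. There are two moves. First, if $f_Z$ is not surjective, I replace $Z$ by the smallest subcomplex $Z'\subseteq Z$ containing $f_Z(Y)$, an inclusion step after which $f_Z$ corestricts to a surjection $Y\to Z'$. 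Second, if $f_Z$ is surjective but not $\pi_1$-surjective, I pass to a cover: in the plain case the connected cover $Z'\to Z$ corresponding to $\img\big((f_Z)_*\big)\leq\pi_1 Z$, and in the cyclic case the $\zee$-cover determined by a surjection $\phi\colon\pi_1 Z\onto\zee$ with $\img\big((f_Z)_*\big)\leq\ker\phi$, performed whenever such a $\phi$ exists. In each case the lifting criterion supplies a lift $f_{Z'}\colon Y\to Z'$, so the move is legitimate.

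The main obstacle is \textbf{termination}: I must rule out an infinite sequence of moves. The monovariant I will use is the number of cells of the current image complex. Since the map $Y\to Z$ is a cellular surjection immediately after every inclusion move, every such $Z$ has at most $N\define\#\{\text{cells of }Y\}$ cells, so these counts are bounded. The crux is that each covering move strictly increases this count. Writing $Z'\to Z$ for the cover and $W=f_{Z'}(Y)\subseteq Z'$ for the image to which the next inclusion move passes, the composite $g\colon W\into Z'\to Z$ is a surjective immersion, so $\#\{\text{cells of }W\}\geq\#\{\text{cells of }Z\}$, with equality exactly when $g$ is a cellular bijection and hence an isomorphism. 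But an isomorphism $g$ would yield a section $Z\to Z'$ of the covering $Z'\to Z$: this is impossible for the $\img((f_Z)_*)$-cover unless $f_Z$ was already $\pi_1$-surjective (contradicting the hypothesis of the move), and impossible for a nontrivial $\zee$-cover because a section would force $\pi_1 Z\leq\ker\phi$, contradicting surjectivity of $\phi$. Hence the count rises at every covering move.

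Because the cell count is bounded by $N$ and strictly increases at each covering move, at most $N$ covering moves can occur; as two inclusion moves cannot occur consecutively, the process halts after finitely many steps and outputs the required maximal tower $X'\immerses X$ with its lift $Y\to X'$. The one subtlety I will flag is that the covers $Z'$ are typically noncompact — always so in the cyclic case — but this causes no difficulty, since $f_{Z'}(Y)$ is the continuous image of a compact space and is therefore a finite subcomplex, to which the subsequent inclusion move immediately returns.
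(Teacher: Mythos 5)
The paper itself contains no proof of this lemma: it is quoted from Howie \cite{howie_pairs_1981}, and the argument there is essentially the one you give --- greedily alternate two moves (pass to the smallest subcomplex containing the image; pass to the cover determined by $\img\big((f_Z)_*\big)$, respectively to a $\zee$-cover whose defining epimorphism kills that image), and force termination by observing that the cell count of the image complex is bounded by that of $Y$ and strictly increases at every covering move. Your equality analysis (cell-count equality makes the restricted covering map a continuous bijection from a compact complex, hence a homeomorphism, hence yields a section, which is impossible for a proper connected cover or for any $\zee$-cover) is also the standard way to get strictness. So your proposal is correct and follows the same route as the cited source.

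Two points should be flagged to make it watertight. First, your monovariant rests on the claims that a cellular surjection $Y\onto Z$ forces $Z$ to have at most $N$ cells, and (implicitly) that images of subcomplexes are subcomplexes. Both are true for \emph{combinatorial} maps, i.e.\ maps sending open cells onto open cells, but both fail for cellular maps in the weak CW sense: an edge can be mapped cellularly onto a path of two edges, so a surjective cellular image can have more cells than its source and need not be a subcomplex at all. This is harmless here, since the complexes and maps in this paper (graphs, immersions, attaching maps of $2$-cells) are combinatorial, and that is the category Howie works in, but your proof should say which category it lives in, because the termination argument genuinely uses it. Second, the covering moves invoke $\pi_1 Z$ and the lifting criterion, which presupposes that $Y$ and $Z$ are connected and based; either assume $Y$ connected (the only case the paper needs, $Y=S^1$) or run the construction separately on the components of the image.
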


As in the previous sections let $\Gamma$ be a graph.  To apply a
cyclic tower argument, one needs to know that the phenomene of
interest are preserved by cyclic coverings.  In our case, that control
is provided by the following lemma.

\begin{lemma}
  \label{lem: Cyclic embedding}
  Consider an infinite cyclic cover of a graph $\Gamma$. Then there is
  an embedding $\tilde{\Gamma}\times\RR\into\Gamma\times\RR$ such that
  the diagram
  \[
  \xymatrix{
    \tilde{\Gamma}\times \RR  \ar@{>}[r]^{\tilde{\pi}}\ar@{>}[d]  & \tilde{\Gamma}\ar@{>}[d] \\
    \Gamma\times\RR \ar@{>}[r]^\pi  & \Gamma
  }
  \]
  commutes where, as usual $\pi$ and $\tilde{\pi}$ denote coordinate
  projections onto $\Gamma$ and $\tilde{\Gamma}$ respectively.  (Note
  that the embedding $\tilde{\Gamma}\times\RR\into\Gamma\times\RR$ is
  usually \emph{not} natural with respect to the coordinate
  projections onto $\RR$.)
\end{lemma}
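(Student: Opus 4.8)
The plan is to realize the infinite cyclic cover as a pullback along a circle-valued map, use the resulting lift to the universal cover $\RR$ as a ``height'' function that separates the sheets of the cover, and then shift each fibre $\{\tilde x\}\times\RR$ of $\tilde{\Gamma}\times\RR$ by this height so that distinct sheets above a common point of $\Gamma$ come to occupy disjoint slabs of width one in $\Gamma\times\RR$.

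First I would fix a continuous map $f\colon\Gamma\to S^1$ inducing the classifying homomorphism $\pi_1(\Gamma)\to\mathbb{Z}$ of the cover, and identify $\tilde{\Gamma}$ with the pullback $\{(x,s)\in\Gamma\times\RR\mid f(x)=\exp(s)\}$, where $\exp\colon\RR\to S^1=\RR/\mathbb{Z}$. This carries the covering projection $p\colon\tilde{\Gamma}\to\Gamma$, $(x,s)\mapsto x$, as the right-hand vertical map of the square, together with a continuous lift $\hat f\colon\tilde{\Gamma}\to\RR$, $(x,s)\mapsto s$. The one property I will use is that over any point $x\in\Gamma$ the values of $\hat f$ on the fibre $p^{-1}(x)$ form a single coset $s_0+\mathbb{Z}$; in particular $\hat f$ is injective on each fibre, and its fibre-values are spaced exactly one apart.

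Next, fixing an increasing homeomorphism $\theta\colon\RR\to(-1/2,1/2)$, I define
\[
\Phi\colon\tilde{\Gamma}\times\RR\to\Gamma\times\RR,\qquad \Phi(\tilde x,t)=\bigl(p(\tilde x),\,\hat f(\tilde x)+\theta(t)\bigr).
\]
Commutativity of the square is then immediate, since $\pi\Phi(\tilde x,t)=p(\tilde x)=p\,\tilde\pi(\tilde x,t)$; and because the second coordinate carries the sheet-dependent shift $\hat f(\tilde x)$, this simultaneously explains the parenthetical remark that $\Phi$ is not compatible with the projections onto $\RR$.

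Finally --- and this is the only point requiring care, since the spaces are non-compact --- I would verify that $\Phi$ is a topological embedding by factoring it. The map $(\tilde x,t)\mapsto(\tilde x,\hat f(\tilde x)+\theta(t))$ is a homeomorphism of $\tilde{\Gamma}\times\RR$ onto the open slab $T=\{(\tilde x,r)\mid |r-\hat f(\tilde x)|<1/2\}$, with continuous inverse $(\tilde x,r)\mapsto(\tilde x,\theta^{-1}(r-\hat f(\tilde x)))$, and $\Phi$ is the composite of this with $p\times\mathrm{id}_\RR$. It thus remains to check that the covering map $p\times\mathrm{id}_\RR\colon\tilde{\Gamma}\times\RR\to\Gamma\times\RR$ restricts to an embedding on $T$. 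It is injective there: if $p(\tilde x)=p(\tilde y)$ and $(\tilde x,r),(\tilde y,r)\in T$, then $\hat f(\tilde x)$ and $\hat f(\tilde y)$ lie in one coset $s_0+\mathbb{Z}$ and are both within $1/2$ of $r$, hence equal, whence $\tilde x=\tilde y$ by fibre-injectivity of $\hat f$. As the restriction of a covering map (a local homeomorphism) to an open set, $(p\times\mathrm{id})|_T$ is an injective local homeomorphism, and therefore an open embedding; composing the two maps yields the desired embedding $\Phi$. The essential idea to flag is that the integer spacing of $\hat f$ along fibres is exactly what permits a whole real line's worth of each sheet, compressed into an interval of length one, to be slotted in without overlap.
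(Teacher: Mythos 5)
Your proof is correct, and it runs on the same key mechanism as the paper's: the sheets of $\tilde{\Gamma}$ over a point of $\Gamma$ are separated by unit spacing in the $\RR$-direction, so compressing the $\RR$-factor into an interval of length one (your $\theta$, the paper's $(-1/2,1/2)$) slots each sheet into its own slab. But the packaging is genuinely different, and the comparison is instructive. The paper argues by descent: $\pi_1\Gamma$ acts diagonally on $\tilde{\Gamma}\times\RR$ (deck transformations on $\tilde{\Gamma}$, translations on $\RR$ via $\pi_1\Gamma\to\zee$), the quotient is \emph{asserted} to be homeomorphic to $\Gamma\times\RR$, and the slab $\tilde{\Gamma}\times(-1/2,1/2)$, having disjoint translates, descends to an embedding of the quotient. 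You instead work upstairs with an explicit formula, built from a classifying map $f\colon\Gamma\to S^1$ and its lift $\hat f\colon\tilde{\Gamma}\to\RR$. The two proofs are tightly linked: the equivariance of $\hat f$ (it shifts by $n$ under the $n$-th deck transformation) encodes exactly the paper's diagonal action, and the shear $(\tilde x,s)\mapsto(\tilde x,\,s-\hat f(\tilde x))$ conjugates that diagonal action to the action on the $\tilde{\Gamma}$-factor alone, which is precisely why the paper's quotient is a trivial $\RR$-bundle over $\Gamma$. Consequently your route buys something concrete: the one step the paper leaves unjustified --- that the quotient is homeomorphic to $\Gamma\times\RR$ \emph{compatibly with the projection to} $\Gamma$, i.e.\ that this flat bundle is trivial --- is proved rather than asserted in your version, since it is exactly your $\hat f$-shift; moreover your factorization of $\Phi$ through the open slab $T$, plus the observation that an injective local homeomorphism on an open set is an open embedding, pins down the embedding claim cleanly. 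What the paper's route buys is brevity and the conceptual picture of a fundamental domain for a group action, at the cost of that (true, but unproved) triviality assertion.
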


\begin{proof}
  Elements $g$ of the group $\pi_1\Gamma$ act by deck transformations
  $x\mapsto gx$ on the covering space $\tilde{\Gamma}$.  The infinite
  cyclic covering $\tilde{\Gamma}\to\Gamma$ also defines a
  homomorphism $\pi_1\Gamma\to\zee$, which in turn allows elements $g$
  of $\pi_1\Gamma$ to act by translation on $\RR$.
 
  Consider the diagonal action of $\pi_1\Gamma$ on
  $\tilde{\Gamma}\times\RR$.  The quotient is homeomorphic to
  $\Gamma\times\RR$.  Let
  $X=\tilde{\Gamma}\times(-1/2,1/2)\subset\tilde\Gamma\times\RR$. Distinct
  translates of $X$ are disjoint, and so the map
  $X\into\tilde\Gamma\times\RR$ descends to an embedding
  $X\into\Gamma\times\RR$.  Any choice of homeomorphism
  $(-1/2,1/2)\cong \RR$ identifies $X$ with
  $\tilde{\Gamma}\times\RR$. It is straightforward to check that the
  claimed diagram commutes.
\end{proof}

We are now ready to prove that stackings exist.  A very simple example
of a stacking is illustrated in Figure~\ref{fig:example}.

\begin{lemma}
  \label{lem:lifttoembedding}
  Any primitive immersion $\Lambda\colon S^1\to\Gamma$ has a stacking
  \[\hat\Lambda\colon S^1\to\Gamma\times\RR\]
\end{lemma}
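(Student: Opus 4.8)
The plan is to produce the required embedding $\hat\Lambda\colon S^1\to\Gamma\times\RR$ by induction via a cyclic tower argument, using the machinery assembled in the previous section. The key point is that the obstruction to stacking is exactly the failure of $\Lambda$ to lift into a cyclic cover, and Howie's maximal tower lemma guarantees that we can always reduce to a situation where such a lift is forced. I would set up the induction so that at the bottom of the tower the stacking is essentially trivial to construct, and then propagate it back up using Lemma~\ref{lem: Cyclic embedding} to push embeddings through cyclic coverings.

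More concretely, first I would apply Howie's lemma to the map $\Lambda\colon S^1\to\Gamma$ to obtain a maximal cyclic tower
\[
\Gamma_0\immerses\Gamma_1\immerses\ldots\immerses\Gamma_n=\Gamma
\]
through which $\Lambda$ lifts to a map $\Lambda_0\colon S^1\to\Gamma_0$. The plan is then to argue that maximality of the tower forces $\Lambda_0$ to be surjective and, crucially, that the infinite cyclic covers appearing in the tower control the stacking. At the base of the tower I would construct a stacking of $\Lambda_0$ directly: when the lifted immersion is as simple as maximality allows, the image graph is small enough that an explicit embedding into $\Gamma_0\times\RR$ can be written down. Then, inductively, given a stacking of $\Lambda_i$ in $\Gamma_i\times\RR$, I would produce one for $\Lambda_{i+1}$ by considering the two cases of the tower: if $\Gamma_i\immerses\Gamma_{i+1}$ is an inclusion of a subcomplex, the stacking extends over the larger $\RR$-bundle with no difficulty; if it is an infinite cyclic cover, I would invoke Lemma~\ref{lem: Cyclic embedding} to embed $\Gamma_i\times\RR$ into $\Gamma_{i+1}\times\RR$ compatibly with the projections to the graphs, and compose to get the stacking one level up.

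The \emph{hard part}, and the place where primitivity of $\Lambda$ must enter, is ensuring that the infinite cyclic covering steps can actually be used to separate the strands of $\hat\Lambda$ into distinct $\RR$-levels. The embedding furnished by Lemma~\ref{lem: Cyclic embedding} is explicitly \emph{not} natural with respect to the $\RR$-coordinate, and this is precisely what lets a cyclic cover convert overlapping preimages of a point of $\Gamma$ into points at different heights in $\Gamma\times\RR$. I would need to check that whenever two distinct points $x\neq y$ of $S^1$ satisfy $\Lambda(x)=\Lambda(y)$, the tower can be arranged so that at some level they map to distinct points of $\Gamma_i$, so that injectivity of $\hat\Lambda$ is achieved. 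This is where primitivity is essential: if $\Lambda$ factored through a proper covering $S^1\to S^1$, distinct points could be genuinely inseparable in every cyclic cover, and no embedding into the trivial bundle would exist. So the main obstacle is verifying that a primitive immersion admits a tower whose cyclic layers resolve all coincidences of $\Lambda$, after which assembling the embedding is a routine (though careful) composition of the maps supplied above.
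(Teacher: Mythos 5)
Your overall architecture is the same as the paper's: take a maximal cyclic tower lifting $\Gamma_0\immerses\Gamma_1\immerses\cdots\immerses\Gamma_n=\Gamma$ of $\Lambda$, stack the bottom lift $\Lambda_0$, and propagate the stacking up the tower, handling subcomplex inclusions by the obvious extension and infinite cyclic covers by Lemma~\ref{lem: Cyclic embedding}. However, there is a genuine gap at the base case, and it leads you to misplace the one point where primitivity is used. You say only that maximality makes $\Lambda_0$ surjective and that ``the image graph is small enough that an explicit embedding can be written down''; this is unjustified, and it is exactly the content that needs proving. The correct deduction is: by maximality, $\Lambda_0$ lifts to no further infinite cyclic cover of $\Gamma_0$, so the image of $H_1(S^1)\to H_1(\Gamma_0;\zee)$ lies in the kernel of no epimorphism $H_1(\Gamma_0;\zee)\onto\zee$; since that image is generated by a single element, $H_1(\Gamma_0)$ has rank at most one, and since $\Gamma_0$ supports a surjective immersion of a circle, $\Gamma_0$ \emph{is} a circle and $\Lambda_0$ is a finite-degree covering map. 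Primitivity enters here and only here: if $\deg\Lambda_0=d>1$, then $\Lambda$ would factor properly through the immersed loop $\Gamma_0\immerses\Gamma$ given by the tower map, contradicting primitivity. Hence $\Lambda_0$ is a homeomorphism, and $x\mapsto(\Lambda_0(x),0)$ is the base stacking.

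Your proposed ``hard part'' --- arranging the tower so that the cyclic layers separate every pair $x\neq y$ with $\Lambda(x)=\Lambda(y)$ at some level $\Gamma_i$, ``so that injectivity of $\hat\Lambda$ is achieved'' --- is a red herring, and pursuing it would lead you astray. No such arrangement is needed or possible to ``engineer'': once the base stacking exists, injectivity propagates for free, because at each step the new map is a composition of two embeddings (the stacking of the previous level, which is an embedding by induction, followed by either the inclusion $\Gamma_i\times\RR\into\Gamma_{i+1}\times\RR$ or the embedding supplied by Lemma~\ref{lem: Cyclic embedding}). The separation of coincident strands into distinct heights is precisely what that lemma already provides, with no further input from $\Lambda$. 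It is true, as it happens, that all coincidences of $\Lambda$ are resolved at the bottom of the tower, since $\Lambda_0$ is injective --- but that is a \emph{consequence} of the base-case analysis above, not a property of the tower one needs to verify separately, and it is exactly the step your proposal leaves vague.
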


\begin{proof}
  Let $\Gamma_0\immerses\Gamma_{1}\immerses\Gamma_n=\Gamma$ be a
  maximal cyclic tower lifting of $\Lambda$, and let $\Lambda_i:S^1\to
  \Gamma_i$ be the lift of $\Lambda$ to $\Gamma_i$. Note that
  $\Gamma_n$ is a circle and $\Lambda_n$ is a finite-to-one covering
  map.  Since $\Lambda$ is primitive, it follows that $\Lambda_n$ is a
  homeomorphism and hence trivially stackable.

  By induction on $n$, we may assume that the map $\Lambda_{n-1}$ has
  a stacking $\hat{\Lambda}_{n-1}:S^1\into\Gamma_{n-1}\times\RR$.  If
  $\Gamma_{n-1}\to\Gamma_n$ is an inclusion of subgraphs then it
  extends naturally to an inclusion
  $i:\Gamma_{n-1}\times\RR\into\Gamma_n\times\RR$, and so
  $\hat{\Lambda}=i\circ\hat{\Lambda}_{n-1}$ is a stacking.
  
  Suppose therefore that $\Gamma_{n-1}\to\Gamma_n$ is an infinite
  cyclic covering map.  Let
  $i:\Gamma_{n-1}\times\RR\to\Gamma_n\times\RR$ be the embedding
  provided by Lemma \ref{lem: Cyclic embedding}.  Then
  $\hat{\Lambda}=i\circ\hat{\Lambda}_{n-1}$ is an embedding
  $S^1\into\Gamma_n\times\RR$, and a simple diagram chase confirms
  that $\hat{\Lambda}_n$ is a lift of $\Lambda_n$.  This completes the
  proof.
\end{proof}

\begin{remark}
  Note that any stacking of a map of a single circle is good for
  trivial reasons.  In fact, Lemma~\ref{lem:lifttoembedding} holds for
  graphs and immersions associated to staggered presentations, and in
  this case as well, $\hat\Lambda$ is necessarily good.
\end{remark}

\begin{figure}[ht]
  \centering
  \includegraphics[width=.7\textwidth]{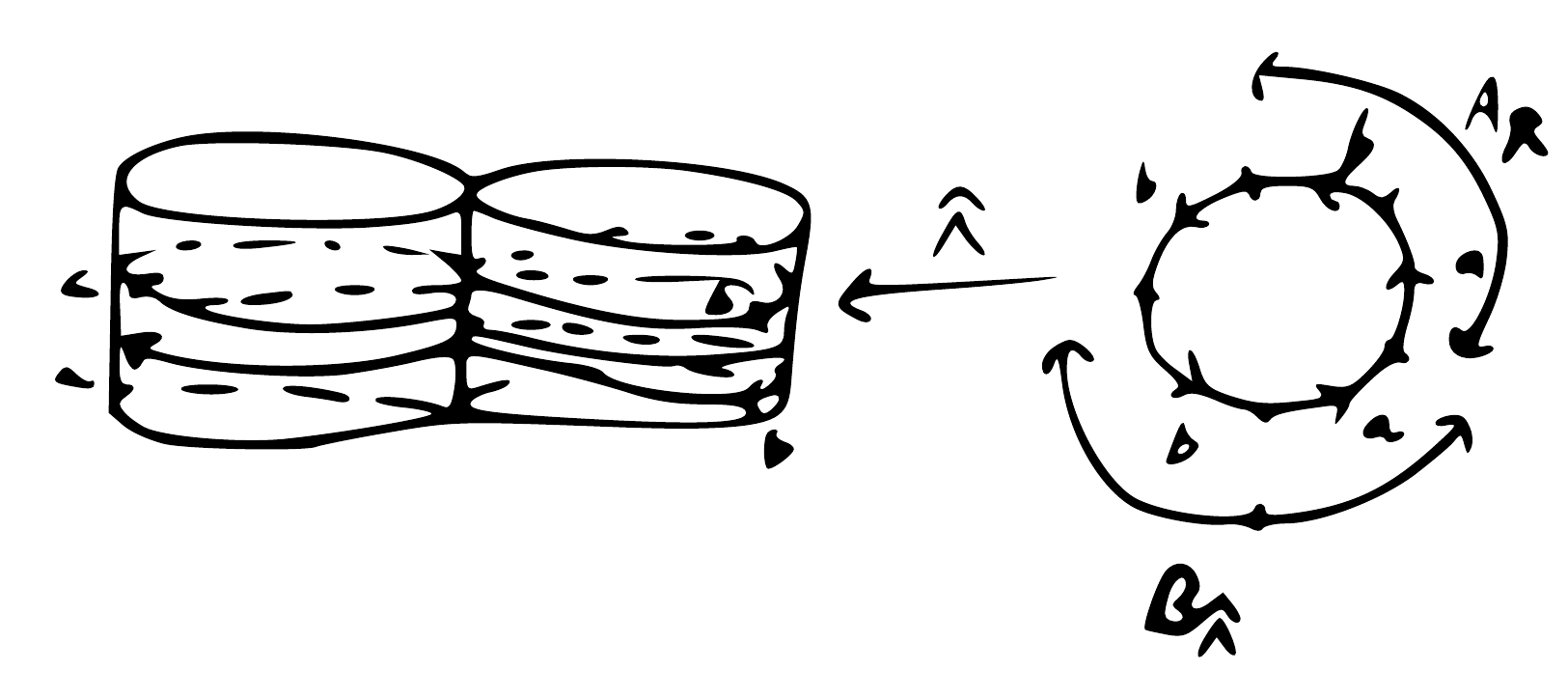}
  \caption{A stacking of the word $aabbb$.}
  \label{fig:example}
\end{figure}

Let $L=\langle x_1,\dotsc,x_n\mid w\rangle$ be a one-relator group,
where $w$ is a cyclically reduced nonperiodic word $w=x_{i_1}\dotsb
x_{i_m}$ in the $x_i$. Duncan and Howie use right-orderability of $L$
to assign heights to the (distinct, by~\cite[Corollary
3.4]{howie:order}) elements $a_0=1$, $a_j=x_{i_1}\dotsb x_{i_j}$,
$j<m$, in $L$ in the same way we use the embedding $\hat\Lambda$ to
find open arcs which remain above ($\AA$) or below ($\BB$) every point
of $S^1$ with the same image in
$\Gamma$. Lemma~\ref{lem:lifttoembedding} is equivalent to the
existence of a right-invariant pre-order on $L$ which distinguishes
between the elements $a_j$.

Our main theorem is now a quick consequence of Lemmas
\ref{lem:reducedrankofsubgroup} and \ref{lem:lifttoembedding}.

\begin{proof}[Proof of Theorem \ref{maintheorem}]
  Let $\Gamma$, $\Gamma'$, etc., be as in Theorem~\ref{maintheorem},
  and let $\hat\Lambda$ be the stacking provided by
  Lemma~\ref{lem:lifttoembedding}. Since $S^1$ is connected, the
  stacking $\hat{\Lambda}$ is automatically good.  By hypothesis
  $\Lambda'$ is not reducible, and therefore by Lemma
  \ref{lem:reducedrankofsubgroup},
  $-\chi(\Gamma')\geq-\chi(\Lambda'(\ess'))\geq\deg\sigma$ as claimed.
\end{proof}

\bibliographystyle{amsalpha} \bibliography{elev}

\begin{flushleft}
\emph{email:} \texttt{l.louder@ucl.ac.uk, h.wilton@maths.cam.ac.uk}
\end{flushleft}

\end{document}